\newtheorem{theorem}{Theorem}[section]
\newtheorem{lemma}[theorem]{Lemma}
\newtheorem{corollary}[theorem]{Corollary}
\theoremstyle{definition}
\newtheorem{definition}[theorem]{Definition}
\theoremstyle{remark}
\newtheorem{remark}[theorem]{Remark}
\numberwithin{equation}{section}
\begin{document}
\title[Uniqueness of Stein fillings]{Stein fillings of contact 3-manifolds\\
 obtained as Legendrian surgeries}


\author{Amey Kaloti }
\address{School of Mathematics \\ Georgia Institute of Technology\\Atlanta, GA 30332, USA}
\email{ameyk@math.gatech.edu}



\author { Youlin Li}
\address{Department of Mathematics \\Shanghai Jiao Tong University\\Shanghai 200240, China}
\email{liyoulin@sjtu.edu.cn}





\begin{abstract}
In this paper, we classify Stein fillings of an infinite family of contact 3-manifolds up to diffeomorphism.  Some contact 3-manifolds in this family  can be obtained by Legendrian surgeries on $(S^3,\xi_{std})$ along certain Legendrian 2-bridge knots. We also classify Stein fillings,  up to symplectic deformation, of an infinite family of contact 3-manifolds which can be obtained by Legendrian surgeries on $(S^3,\xi_{std})$ along certain Legendrian twist knots.  As a corollary, we obtain a classification of Stein fillings of an infinite family of contact hyperbolic 3-manifolds  up to symplectic deformation.
\end{abstract}

\maketitle

\section{\textbf{Introduction}}

A \textit{Stein manifold} is a complex manifold that admits a proper holomorphic embedding into $\mathbb{C}^{N}$ for some large integer $N$. According to \cite{Grauert_Levi's_problem_and_imbedding_real_analytic_manifolds_1958}, a Stein manifold $W$ admits an exhausting plurisubharmonic function $\rho: W\rightarrow \mathbb{R}$. For any regular value $c$ of $\rho$, the complex tangencies define a contact structure on the level set $M_{c}:=\{x\in W|\rho(x)=c\}$. We call the manifold $W_{c}:=\{x\in W|\rho(x)\leq c\}$ a \textit{Stein filling} of the contact manifold $M_{c}$.

Given a contact 3-manifold $(M,\xi)$, there are two natural questions one can ask: Is $(M,\xi)$  Stein fillable? and if yes, then is it possible to classify all the Stein fillings of $(M,\xi)$?

The first question has been addressed in terms of open book decompositions. For example, in \cite{Giroux_Correspondence}, Giroux gave a complete characterization of Stein fillability in terms of open book decomposition, see also \cite{Akbulut_Ozbagci_Stein_Surface_Lefschetz_fibrations, Loi_Piergallini_Lefschetz_fibrations}.

The second question has been answered for some specific contact 3-manifolds. Here we recall a few results in this direction. In \cite{Eliashberg_Filling_by_holomorphic_disks_and_applications}, Eliashberg showed that there is a unique Stein filling, up to symplectic deformation, of $S^3$ with the standard tight contact structure. In \cite{Stipsicz_gauge_theory_and_stein_fillings_of_certain_3_manifolds}, Stipsicz showed that there is a unique Stein filling, up to homeomorphism, of the Poincar\'{e} homology sphere $\Sigma(2,3,5)$ and the 3-torus $T^3$. In \cite{Wendl_Planar_open_books}, Wendl showed that there is a unique Stein filling, up to symplectomorphism, of the 3-torus $T^3$. In \cite{McDuff_Rational_ruled_surfaces}, McDuff showed that, for a universally tight contact structure, there is a unique Stein filling of a lens space $L(p,1)$ ($p\neq4$), and there are two Stein fillings of the lens space $L(4,1)$,  up to diffeomorphism.  In \cite{Lisca_Classification_of_Stein_fillings_on_lens_spaces}, Lisca classified the Stein fillings of universally tight lens spaces up to diffeomorphism.  In \cite{Plamenevskaya_VHMorris_Planar_open_books}, Plamenevskaya and Van Horn-Morris showed that there is a unique Stein filling, up to symplectic deformation, of $L(p,1)$ with a virtually overtwisted tight contact structure. In \cite{Kaloti_Stein_fillings_of_planar_open_books}, among other classes of manifolds, the first author classified Stein fillings of virtually overtwisted tight contact structures on the lens space $L(pm+1,m)$ for $p,m\geq1$, up to symplectic deformation. In \cite{Ohta_Ono_Classifications_of_Stein_fillings_1, Ohta_Ono_Classifications_of_Stein_fillings_2}, Ohta and Ono classified the Stein fillings of some links of simple singularities. In \cite{Schoenenberger_thesis_upenn}, Sch\"{o}nenberger showed that the Stein fillings of some contact Seifert fibered spaces are unique up to diffeomorphism. In \cite{Starkston_Symplectic_fillings_of_Seifert_fibered_spaces}, Starkston gave finiteness results and some classifications, up to diffeomorphism, of minimal strong symplectic fillings of certain contact Seifert fibered spaces over $S^{2}$.

In this paper, we classify the Stein fillings, up to diffeomorphism, or up to symplectic deformation, for an infinite family of contact 3-manifolds including an infinite number of contact hyperbolic 3-manifolds. This is the first known classification result for any contact hyperbolic $3$-manifolds.

These contact manifolds are supported by particular open books, which we describe now. Let $\Sigma$ be a compact planar surface with $n+p+q+1$ boundary components $c_0, c_1, \ldots, c_{n+p+q}$ as shown in Figure~\ref{fig:p14}, where $n,k,p,q \geq1$ and $n\geq k$. Let $\Phi$ be a diffeomorphism which is the composition of right handed Dehn twists written as $$\Phi=\tau_{1}^{m_1}\tau_{2}^{m_2}\ldots \tau_{n+q-1}^{m_{n+q-1}}\tau_{n+q+1}^{m_{n+q+1}}\ldots\tau_{n+p+q}^{m_{n+p+q}}\tau_{B_1}\tau_{B_2},$$ where $\tau_i$ is the positive Dehn twist about a simple closed curve parallel to the boundary component $c_i$, $m_i\geq 0$, and $\tau_{B_1}$, $\tau_{B_2}$ are positive Dehn twists along the simple closed curves $B_1$ and $B_2$ shown in Figure~\ref{fig:p14}.

\begin{figure}[htb]
\begin{overpic}
{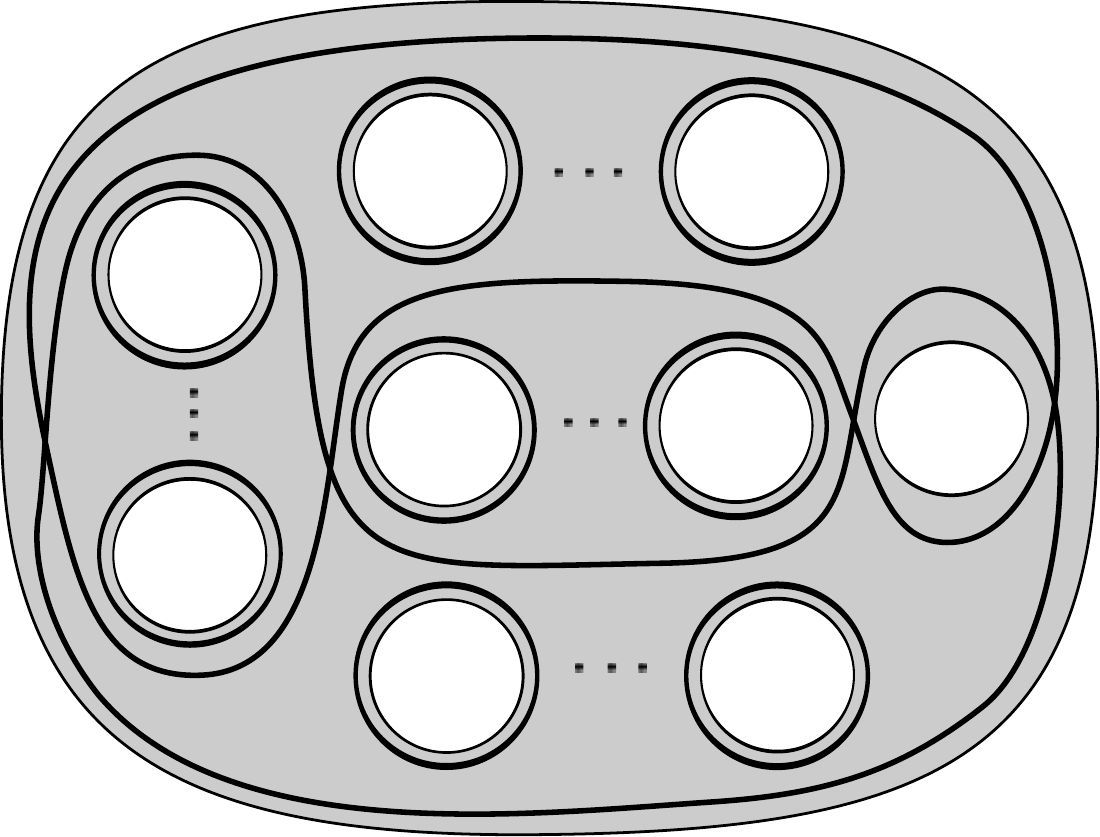}
\put(274, 11){$0$}
\put(222, 45){$1$}
\put(118, 45){$k-1$}
\put(53, 80){$k$}
\put(31.5, 160){$k+q-1$}
\put(113, 190){$k+q$}
\put(196, 190){$n+q-1$}
\put(264, 119){$n+q$}
\put(191.5, 116){$n+q+1$}
\put(106.25, 116){$n+q+p$}
\put(270, 190){$B_1$}
\put(270, 45){$B_2$}
\end{overpic}
\caption{A compact planar surface $\Sigma$ with $n+q+p+1$ boundary components.}
\label{fig:p14}
\end{figure}

\begin{theorem}
\label{theorem_factorization_of_mapping_class}
Let $(M,\xi)$ be the contact 3-manifold supported by the open book $(\Sigma, \Phi)$. Then the contact $3$-manifold $(M,\xi)$ admits a unique Stein filling up to diffeomorphism.
\end{theorem}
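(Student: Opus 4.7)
The plan is to invoke Wendl's theorem on planar open books to reduce the problem to a combinatorial classification of positive Dehn-twist factorizations of $\Phi$, and then to carry out that classification using the structure of simple closed curves on a planar surface together with the lantern relation.

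First, since the page $\Sigma$ is planar, Wendl's theorem guarantees that every minimal strong symplectic filling of $(M,\xi)$, and in particular every Stein filling, is symplectic deformation equivalent to the total space of an allowable Lefschetz fibration over $D^{2}$ with page $\Sigma$ whose vanishing cycles yield a positive factorization of $\Phi$ into right-handed Dehn twists. Thus the diffeomorphism type of any Stein filling is determined by the Hurwitz equivalence class of such a positive factorization, and the theorem reduces to showing that every positive factorization of $\Phi$ yields a 4-manifold diffeomorphic to the one coming from the prescribed factorization.

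Next I would classify the possible positive factorizations. Since $\Sigma$ is planar, every essential simple closed curve is determined by the proper subset of boundary components it encloses, so each Dehn twist in a factorization can be recorded by such a subset. The total monodromy $\Phi$ imposes strong constraints: by computing the exponent with which each boundary-parallel twist $\tau_{c_{i}}$ appears in $\Phi$ (equivalently, the ``cap-off'' or boundary-monodromy count), one determines how many times a given boundary component $c_{i}$ may be enclosed across the factorization. Crucially, the only curves in the given factorization enclosing $c_{0}$ are $B_{1}$ and $B_{2}$, so any competing positive factorization must also contain exactly two vanishing cycles enclosing $c_{0}$, and the combinatorics around the other boundary components are similarly constrained.

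The main obstacle will be proving that, after applying Hurwitz moves and global conjugation, this pair of distinguished cycles must be isotopic to $(B_{1},B_{2})$ and the remaining cycles to the prescribed boundary-parallel ones. The key technical tool is the lantern relation, essentially the only nontrivial relation among positive Dehn twists on a planar surface and the source of any potential alternative factorizations; a careful case analysis, keeping track of which four-holed subsurfaces admit lantern substitutions compatible with the enclosure counts forced by $\Phi$, must rule out every non-standard configuration. This is the combinatorial core of the proof and parallels the strategies of Plamenevskaya--Van Horn-Morris and the first author's earlier work on planar open books, but here the presence of the interior curves $B_{1}$ and $B_{2}$ introduces additional casework that must be handled carefully. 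Once the factorization is pinned down up to Hurwitz equivalence, the associated Lefschetz fibration, and hence its total space, is unique up to diffeomorphism, completing the proof.
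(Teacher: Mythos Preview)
Your reduction via Wendl's theorem and your use of multiplicity/joint-multiplicity counts to constrain which holes each vanishing cycle encloses are both correct, and indeed mirror the paper's Lemma~\ref{lemma_combinatorial}. The gap lies in what happens after that.

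You write that ``since $\Sigma$ is planar, every essential simple closed curve is determined by the proper subset of boundary components it encloses.'' This is false as stated: that data only determines the curve up to the action of $Map(\Sigma,\partial\Sigma)$, not up to isotopy. There are infinitely many isotopy classes of simple closed curves enclosing any fixed set of holes, and a global conjugation cannot simultaneously normalize both $B'_1$ and $B'_2$. So after the combinatorial lemma you still only know that $\tau_{B_1}\tau_{B_2}=\tau_{B'_1}\tau_{B'_2}$ with $B'_i$ enclosing the same holes as $B_i$; you have not controlled the isotopy classes of the $B'_i$, and your proposed tool---lantern relations and substitution casework---does not address this. Lantern moves trade off configurations of curves in four-holed subsurfaces, but they give you no leverage on the problem of pinning down which of the infinitely many curves with a prescribed hole set actually appears.

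The paper handles this by a completely different route. After conjugating $B'_1$ to $B_1$, it observes that $B_1$ and $B_2$ \emph{fill} the (reduced) surface, applies Thurston's affine representation of $\langle\tau_{B_1},\tau_{B_2}\rangle$ into $PSL(2,\mathbb{R})$ to compute the stretch factor of the pseudo-Anosov $\tau_{B_1}\tau_{B_2}$, and uses conjugacy invariance of the stretch factor to force $I(B'_1,B'_2)=4$. A cut-and-paste analysis of arcs then shows there are at most \emph{two} choices for $B''_2$ up to conjugation---not one---and a Kirby-diagram comparison finally shows the two resulting Lefschetz fibrations are diffeomorphic. Along the way the paper also needs a right-veering argument to show the $B'_i$ miss certain arcs, allowing reduction to the small-surface cases $\mathbb{D}_3,\mathbb{D}_4,\mathbb{D}_5$. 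None of these ingredients---the pseudo-Anosov stretch factor, the arc-cutting argument, the right-veering step, or the Kirby comparison between two genuinely distinct factorizations---appears in your outline, and the lantern relation does not substitute for any of them.
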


The open book shown in Figure~\ref{fig:p14} covers a lot of interesting special cases. We describe a few of them below. In $(S^{3}, \xi_{std})$, let $L$ be a Legendrian twist knot, $K_{-2p}$, with Thurston-Bennequin invariant $-1$ and rotation number $0$, where $2p$ denotes the number of left-handed half twists. If $p=1$, then it is a right handed trefoil. Note that the maximal Thurston-Bennequin invariant is $1$ for the Legendrian twist knot $K_{-2p}$ at hand \cite{Etnyre_Ng_Vertesi_Twist_knots_classification}, and one stabilizes it twice to obtain the Legendrian knot $L$ and stabilize it further as in Figure~\ref{fig:Legendrian_surgery_diagram_for_infinite_family1}.
Let $n,k\geq 1$ be two integers such that $n \geq k$. Let $S^{n-k}_{+}S^{k-1}_{-}(L)$ be the result of $n-k$ positive stabilizations and $k-1$ negative stabilizations of $L$. Figure~\ref{fig:Legendrian_surgery_diagram_for_infinite_family1} depicts a Legendrian link in $(S^{3}, \xi_{std})$ one of whose components is $S^{n-k}_{+}S^{k-1}_{-}(L)$. The other components are all Legendrian unknots with Thurston-Bennequin invariant $-1$, pushed off $m_{i}$ times, where $m_i$ is a non-negative integer for $i=1,\ldots, k-1, k+1,\ldots, n$ if $k>1$ or $n>k$. Let $(M',\xi')$ denote the contact structure obtained by performing Legendrian surgery along the link given in Figure~\ref{fig:Legendrian_surgery_diagram_for_infinite_family1}.

\begin{figure}[htb]
\begin{overpic}
{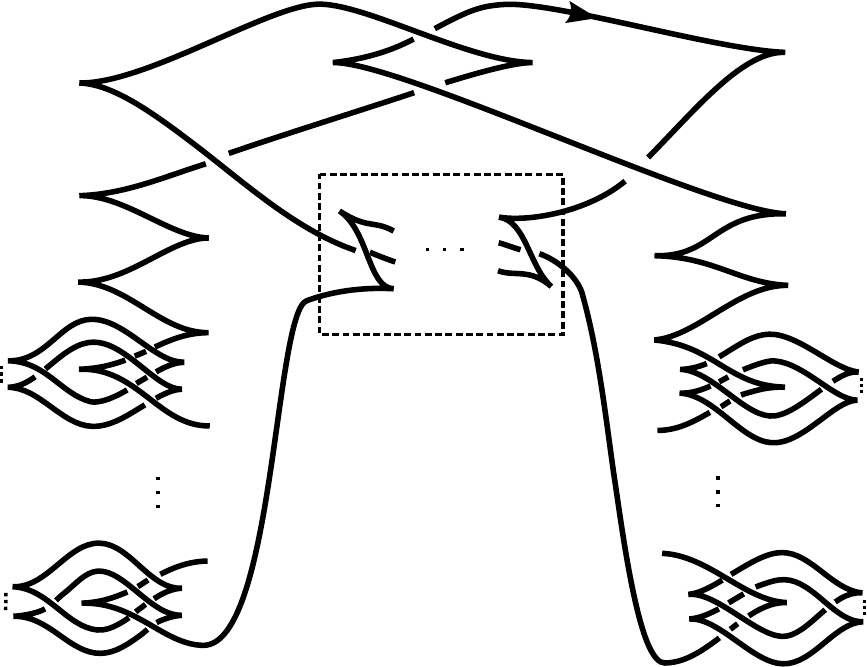}
\put(-45, 80){$m_{k+1}-1$}
\put(253, 80){$m_{k-1}-1$}
\put(-33, 16){$m_{n}-1$}
\put(253, 16){$m_{1}-1$}
\end{overpic}
\caption{A Legendrian link one of whose components is a Legendrian twist knot $K_{-2p}$, where the box consists of $2p-2$ Legendrian tangle $S$'s which is depicted below in Figure~\ref{fig:S}. There are $k-1$ upward cusps of the Legendrian $K_{-2p}$  each of which hooks $m_{i}-1$ Legendrian unknots for $i=1,\ldots, k-1$.  There are $n-k$ downward cusps of the Legendrian $K_{-2p}$  each of which hooks $m_{i}-1$ Legendrian unknots for $i=k+1,\ldots, n$.}
\label{fig:Legendrian_surgery_diagram_for_infinite_family1}
\end{figure}

\begin{figure}[htb]
\begin{overpic}
{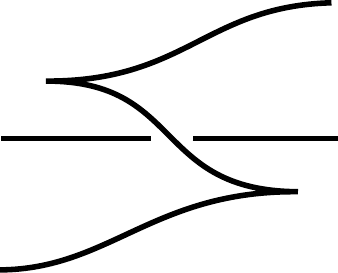}
\end{overpic}
\caption{A Legendrian tangle $S$.}
\label{fig:S}
\end{figure}

\begin{corollary}
\label{twist_knot_family}
The contact 3-manifold $(M',\xi')$ has a unique Stein filling up to diffeomorphism.
\end{corollary}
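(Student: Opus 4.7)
The plan is to reduce Corollary~\ref{twist_knot_family} to Theorem~\ref{theorem_factorization_of_mapping_class} by producing an explicit open book decomposition of $(M',\xi')$ that matches the form $(\Sigma,\Phi)$ described just before the theorem.

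First, I would construct a planar open book $(\Sigma_0,\Phi_0)$ of $(S^3,\xi_{std})$ on a page of which the Legendrian link of Figure~\ref{fig:Legendrian_surgery_diagram_for_infinite_family1} sits as a collection of disjoint simple closed curves that are Legendrian realizations in the contact framing. For this I would use the standard Akbulut--Ozbagci type algorithm: start with the disk open book of $(S^3,\xi_{std})$, place the link in a bridge position with respect to the binding, and positively stabilize once per bridge. Since $K_{-2p}$ is a two-bridge knot and each Legendrian unknot needs only one bridge, the resulting page $\Sigma_0$ remains planar. The number of positive stabilizations of the open book (and hence the eventual boundary components of $\Sigma$) is dictated by the number of cusps of the Legendrian link, which in turn is determined by the parameters $n$, $k$, $p$ and the stabilization pattern $S_+^{n-k}S_-^{k-1}$.

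Second, I would invoke the standard principle that Legendrian surgery along a Legendrian knot sitting on a page of a supporting open book produces a new open book with the same page whose monodromy is obtained by postcomposing with a positive Dehn twist along that curve (see e.g.\ \cite{Akbulut_Ozbagci_Stein_Surface_Lefschetz_fibrations}). Performing this for each component of the Legendrian link yields an open book $(\Sigma,\Phi')$ supporting $(M',\xi')$. The $m_i$ parallel Legendrian pushoffs that hook the $i$-th cusp contribute $\tau_i^{m_i}$, where $\tau_i$ is a Dehn twist about a curve parallel to the boundary component $c_i$ created by the stabilization at that cusp.

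Third, and this is the heart of the matter, I would identify the contribution coming from the twist knot $K_{-2p}$ itself with the product $\tau_{B_1}\tau_{B_2}$. After placing $K_{-2p}$ on the page, the $2p-2$ copies of the tangle $S$ force the page to carry exactly the boundary components $c_{k},\ldots,c_{k+q-1}$ and the central component $c_{n+q}$ of Figure~\ref{fig:p14} (so that the parameter $q$ of the theorem is determined by $p$ of the corollary, most likely $q=p-1$ together with one extra boundary from the clasp region), and the Dehn twist along $K_{-2p}$ factors through the planar surface as the product of two Dehn twists along the curves $B_1$ and $B_2$ drawn in Figure~\ref{fig:p14}. This factorization is essentially a bookkeeping exercise, but one must redraw the stabilized twist knot explicitly on the planar page to see that the two halves of its clasp give exactly $B_1$ and $B_2$.

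Once the open book is identified with $(\Sigma,\Phi)$ for the appropriate choice of parameters $n,k,p,q$ and exponents $m_1,\ldots,m_{n+q-1},m_{n+q+1},\ldots,m_{n+p+q}$, the corollary follows immediately from Theorem~\ref{theorem_factorization_of_mapping_class}. The principal obstacle is the third step: drawing the page, tracking the cusps and stabilizations through the Akbulut--Ozbagci algorithm, and verifying in detail that the resulting monodromy is \emph{exactly} the factorization prescribed by the theorem, with the two twists $\tau_{B_1},\tau_{B_2}$ matching the clasp/half-twist structure of $K_{-2p}$. Everything else is a direct application of already established technology.
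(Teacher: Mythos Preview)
Your overall strategy---embed the Legendrian link on a planar page supporting $(S^3,\xi_{std})$, perform Legendrian surgery by composing with positive Dehn twists, and match the result to the hypotheses of Theorem~\ref{theorem_factorization_of_mapping_class}---is exactly what the paper does. However, your third step misidentifies how the monodromy actually arises, and this would prevent you from matching the parameters correctly.

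The key error is your claim that ``the Dehn twist along $K_{-2p}$ factors through the planar surface as the product of two Dehn twists along the curves $B_1$ and $B_2$''. This is not what happens. In the paper's construction (via the Li--Wang embedding \cite{Li_Wang_support_genera_of_legendrian_knots} and Lemma~\ref{embed_to_abstract}), the stabilized Legendrian twist knot $L'$ sits on the page as a \emph{single} simple closed curve, which under the identification with Figure~\ref{fig:p14} becomes $B_2$. The curve $B_1$ is not produced by the surgery at all: it is one of the stabilizing curves for the open book of $(S^3,\xi_{std})$ itself, namely the last Hopf band plumbed on (the curve $\alpha$ in Figure~\ref{fig:abstract_open_book_twist_knot}). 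Thus $\tau_{B_1}$ is already present in the monodromy $\phi$ of the $S^3$ open book before any surgery, and Legendrian surgery along $L'$ contributes only the single extra twist $\tau_{B_2}$.

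Correspondingly, your guess about the parameter $q$ is wrong: for this corollary one always has $q=1$. The $2p-2$ tangles $S$ in the twist region do not produce holes in the ``$q$-region'' of Figure~\ref{fig:p14}; rather, they account for the holes $c_{n+2},\ldots,c_{n+p+1}$ lying in the ``$p$-region'' outside both $B_1$ and $B_2$, each carrying a single boundary-parallel twist coming from the $S^3$ monodromy. The Legendrian unknots hooking the cusps contribute the exponents $m_i$ on $\tau_i$ for $i\neq k$, while $\tau_k$ keeps exponent $1$. Once you trace the Hopf-band plumbings in this way, the monodromy is precisely
\[
\Phi=\tau_{1}^{m_1}\cdots\tau_{k-1}^{m_{k-1}}\tau_{k}\tau_{k+1}^{m_{k+1}}\cdots\tau_{n}^{m_{n}}\tau_{n+2}\cdots\tau_{n+p+1}\tau_{B_1}\tau_{B_2}
\]
with $q=1$, and Theorem~\ref{theorem_factorization_of_mapping_class} applies directly.
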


Another application of the above observation is classifying Stein fillings of manifolds obtained by Legendrian surgeries along some Legendrian $2$-bridge knots. Figure~\ref{fig:2-bridge_knots} depicts a 2-bridge knot $B(p,q)$, where $p,q$ are positive integers. If $q=1$, then it is the twist knot $K_{-2p}$.

\begin{figure}[htb]
\begin{overpic}
{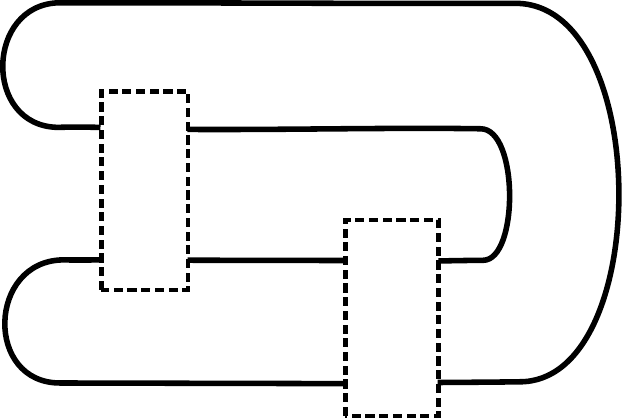}
\put(33, 64){$-2q$}
\put(105, 25){$-2p$}
\end{overpic}
\caption{A $2$-bridge knot $B(p,q)$ with $p,q > 0$. The boxes containing $-2p$ and $-2q$ denote $2p$ and $2q$ negative half twists, respectively. }
\label{fig:2-bridge_knots}
\end{figure}

\begin{corollary}
\label{2_bridge_family}
There is a Legendrian 2-bridge knot $B(p,q)$ with Thurston-Bennequin invariant $-1$ and rotation number $0$, such that the  Legendrian surgery on $(S^3, \xi_{std})$ along any of its stabilizations yields a contact 3-manifold with unique Stein filling up to diffeomorphism.
\end{corollary}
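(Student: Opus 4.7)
The plan is to deduce Corollary~\ref{2_bridge_family} as a direct application of Theorem~\ref{theorem_factorization_of_mapping_class}, by exhibiting, for a suitable Legendrian representative of $B(p,q)$, an open book presentation of the surgered contact manifold that matches $(\Sigma,\Phi)$ from the theorem.

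First, I would fix a Legendrian representative of $B(p,q)$ with $tb=-1$ and $rot=0$, obtained from a front projection in which each of the two twist regions (with $-2p$ and $-2q$ half twists) is filled by a sequence of Legendrian tangles of the type $S$ shown in Figure~\ref{fig:S}, directly generalizing the twist knot construction used in Corollary~\ref{twist_knot_family} (which recovers the case $q=1$). The resulting front has upward and downward cusps which can be hooked by Legendrian unknots of maximal Thurston-Bennequin invariant, pushed off $m_i$ times, exactly as in Figure~\ref{fig:Legendrian_surgery_diagram_for_infinite_family1}; thus the full Legendrian surgery link is of the same combinatorial form as in Corollary~\ref{twist_knot_family}.

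Next, I would translate the Legendrian surgery diagram into an open book decomposition. The key step is to identify a planar open book of $(S^3,\xi_{std})$ with page $\Sigma$ as in Figure~\ref{fig:p14} (for appropriate $n,k,p,q$), on which the Legendrian $B(p,q)$ sits so that Legendrian surgery contributes the factor $\tau_{B_1}\tau_{B_2}$ to the monodromy, while each auxiliary bunch of $m_i-1$ parallel unknots contributes the factor $\tau_i^{m_i}$ about the appropriate boundary component. Positive and negative stabilizations of the Legendrian knot correspond to positive and negative stabilizations of the open book, which introduce the remaining boundary components of $\Sigma$ in a controlled fashion. Assembling these ingredients, the surgered manifold is supported by an open book of the exact form $(\Sigma,\Phi)$ appearing in Theorem~\ref{theorem_factorization_of_mapping_class}, from which uniqueness of the Stein filling is immediate.

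The main obstacle is the geometric identification in the previous paragraph: explicitly constructing the planar open book of $(S^3,\xi_{std})$ and verifying that the chosen Legendrian representative of $B(p,q)$ produces precisely the monodromy factor $\tau_{B_1}\tau_{B_2}$ after surgery. This requires a careful analysis analogous to, but more involved than, the case $q=1$ treated in Corollary~\ref{twist_knot_family}, since the presence of two nontrivial twist boxes means one must track how both boxes get realized by the curves $B_1$ and $B_2$ on the page. Once this model is in place, the bookkeeping for stabilizations and for the auxiliary boundary-parallel Dehn twists is routine, and the corollary follows directly from Theorem~\ref{theorem_factorization_of_mapping_class}.
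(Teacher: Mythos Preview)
Your overall strategy---reduce to Theorem~\ref{theorem_factorization_of_mapping_class} by exhibiting the surgered manifold as supported by $(\Sigma,\Phi)$---is the same as the paper's. But there is a genuine error in your execution: you write that ``Legendrian surgery contributes the factor $\tau_{B_1}\tau_{B_2}$ to the monodromy.'' Legendrian surgery along a single knot sitting on a page contributes exactly \emph{one} Dehn twist, not two. The correct picture, which the paper uses, is that the open book
\[
(\Sigma,\ \tau_{1}\tau_{2}\cdots \tau_{n+q-1}\tau_{n+q+1}\cdots\tau_{n+p+q}\tau_{B_1})
\]
already supports $(S^3,\xi_{std})$; the curve $B_1$ is part of the monodromy of the unfilled sphere (it is the stabilizing curve from the last Hopf band plumbed on). The Legendrian $2$-bridge knot is realized by the curve $B_2$ alone, and surgery along it adds the single factor $\tau_{B_2}$, producing $\Phi$. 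If you try to make both $\tau_{B_1}$ and $\tau_{B_2}$ come from the surgery, you will not be able to build the required open book for $S^3$.

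Two smaller points. First, Corollary~\ref{2_bridge_family} concerns surgery on a single knot, so there are no auxiliary unknot components here; the $m_i$ are all equal to $1$ (with $m_{n+q}=0$), and your discussion of ``$m_i-1$ parallel unknots'' is extraneous. Second, the paper runs the identification in the opposite direction from what you propose: rather than starting from a front for $B(p,q)$ and building an open book around it, it starts from the abstract open book above, passes to the embedded version (analogous to Figure~\ref{fig:embedded_open_book}), and then simply observes that $B_2$ on the page is a Legendrian $B(p,q)$ which, via Lemma~\ref{lemma:knot_stabilization}, is the appropriate stabilization of a $tb=-1$, $rot=0$ representative. This sidesteps the ``main obstacle'' you flag, since one never has to engineer the open book from the front projection; the identification of $B_2$ as $B(p,q)$ is a direct reading of the embedded picture.
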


In addition to classifying Stein fillings up to diffeomorphism, we can classify Stein fillings of Legendrian surgeries along some Legendrian twist knots up to symplectic deformation. Even though these manifolds admit open books considered in Theorem~\ref{theorem_factorization_of_mapping_class}, we include a separate proof here because the notion of symplectic deformation is stronger than that of diffeomorphism.

\begin{theorem}
\label{trefoil_family}
If $L$ is  a Legendrian twist knot $K_{-2p}$ with Thurston-Bennequin invariant $-1$ and rotation number $0$, then the Legendrian surgery on $(S^3, \xi_{std})$ along any stabilization of $L$ yields a contact 3-manifold with unique Stein filling up to symplectic deformation equivalence.
\end{theorem}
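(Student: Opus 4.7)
The strategy is to upgrade the diffeomorphism classification of Theorem~\ref{theorem_factorization_of_mapping_class} to symplectic deformation equivalence by working at the level of Lefschetz fibrations and invoking Wendl's theorem on planar open books.

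First, I would verify that $(M',\xi')$ is supported by a planar open book whose page and monodromy are a special instance of $(\Sigma,\Phi)$ from Theorem~\ref{theorem_factorization_of_mapping_class}. This amounts to a direct computation: start from the standard open-book/contact-surgery algorithm applied to the Legendrian surgery diagram for a stabilized $K_{-2p}$, and track how each stabilization of $L$ contributes a boundary-parallel Dehn twist $\tau_i^{m_i}$, while the two clasps of the twist knot $K_{-2p}$ assemble into the pair $\tau_{B_1}\tau_{B_2}$ in the monodromy.

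Next, I would appeal to Wendl's theorem: every minimal strong symplectic filling of a contact $3$-manifold supported by a planar open book is symplectic deformation equivalent to a (positive) allowable Lefschetz fibration over $D^2$ supporting that open book, and two such fillings are symplectic deformation equivalent if and only if their monodromy factorizations are Hurwitz equivalent up to global conjugation in $\mathrm{Map}(\Sigma,\partial\Sigma)$. Combined with the step above, this reduces Theorem~\ref{trefoil_family} to the assertion that any positive factorization of $\Phi$ into right-handed Dehn twists along non-separating simple closed curves on $\Sigma$ is Hurwitz equivalent, up to global conjugation, to the prescribed factorization $\tau_{1}^{m_1}\cdots\tau_{n+p+q}^{m_{n+p+q}}\tau_{B_1}\tau_{B_2}$.

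The main step, and the principal obstacle, is this factorization classification. I would follow the template developed by Plamenevskaya--Van Horn-Morris and by the first author: using the abelianization of $\mathrm{Map}(\Sigma,\partial\Sigma)$ given by linking with the boundary components, together with an Euler-characteristic count on the Lefschetz fibration, first bound the number of Dehn twists in any alternative factorization and constrain how each such twist links the various boundary circles $c_0,\dots,c_{n+p+q}$. Second, show via these homological constraints that the only non-boundary-parallel simple closed curves that can appear are in the $\mathrm{Map}(\Sigma,\partial\Sigma)$-orbits of $B_1$ and $B_2$, and that exactly two such twists must occur. Third, apply the lantern relation (and its generalizations to daisy relations around the multi-holed subsurfaces enclosed by $B_1$ and $B_2$) to trade any apparently exotic configurations for boundary-parallel twists, and then use Hurwitz moves to bring the factorization into the standard form. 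The hyperelliptic symmetry intrinsic to the twist-knot open book is what makes this curve-identification step tractable here, while the general monodromy of Theorem~\ref{theorem_factorization_of_mapping_class} requires a separate, weaker (diffeomorphism-level) argument; this is why Theorem~\ref{trefoil_family} warrants a proof distinct from that of Corollary~\ref{twist_knot_family}.
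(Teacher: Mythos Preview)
Your reduction to planar Lefschetz fibrations via Wendl is the same first step as in the paper, but from there your plan diverges from what is actually needed, and the divergence contains a real gap.

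You propose to show that \emph{any} positive factorization of $\Phi$ is Hurwitz equivalent, up to global conjugation, to the standard one. But the paper's own analysis (Lemmas~\ref{lemma_uniqueness_of_positive_factorization}--\ref{more_general_uniqueness_of_factorization} feeding into Theorem~\ref{theorem_factorization_of_mapping_class}) shows that there are in fact up to \emph{two} factorizations up to global conjugation, corresponding to two possible isotopy classes for the curve $B''_2$. These two are not shown to be Hurwitz--conjugation equivalent; the diffeomorphism identification of the two fillings in Theorem~\ref{theorem_factorization_of_mapping_class} is done by comparing Kirby diagrams, which does not give symplectic deformation equivalence. So your program of proving factorization uniqueness via abelianization bounds plus lantern/daisy relations would have to confront these two candidate factorizations and show they are Hurwitz equivalent---something the paper never claims, and which your sketch gives no mechanism for. (Incidentally, there are no non-separating simple closed curves on a planar surface, and the remark in the paper that ``a Hurwitz move is also a global conjugation'' here indicates that Hurwitz moves alone cannot help you move between the two candidates.)

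The paper's proof of Theorem~\ref{trefoil_family} avoids this issue entirely by a different idea: it strips off the $\tau_{B'_2}$ factor and observes that $(\Sigma,\tau_1\cdots\tau_{n+p+1}\tau_{B'_1})$ still supports $(S^3,\xi_{std})$, so that $B'_2$ can be Legendrian realized as a knot in $(S^3,\xi_{std})$; one then checks it is smoothly $K_{-2p}$ and computes its page framing, hence its Thurston--Bennequin invariant, to be $-n$. The Legendrian classification of twist knots (Etnyre--Ng--V\'ertesi) then pins down the Legendrian isotopy class uniquely once the rotation number is fixed by the contact structure $\xi_k$, and Eliashberg's handle-attachment theorem gives a unique Stein structure on the filling. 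This is the missing ingredient in your plan: the upgrade from diffeomorphism to symplectic deformation equivalence comes not from sharper mapping-class-group combinatorics, but from the external input of the Legendrian classification of $K_{-2p}$.
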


If $p>1$, then the twist knot $K_{-2p}$ is hyperbolic. By the hyperbolic Dehn surgery theorem in \cite{Thurston_3_Manifolds_Kleinian_groups_and_hyperbolic_geometry}, Legendrian surgery on $(S^3, \xi_{std})$ along a Legendrian hyperbolic twist knot with sufficiently many stabilizations yields a contact hyperbolic 3-manifold. So, immediately, we have

\begin{corollary}
There are infinitely many contact hyperbolic 3-manifolds each of which admits a unique Stein filling up to symplectic deformation equivalence.
\end{corollary}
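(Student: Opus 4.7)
The plan is to combine Theorem~\ref{trefoil_family} with Thurston's hyperbolic Dehn surgery theorem, which is already flagged in the preceding paragraph. First I would fix any integer $p>1$, so that the smooth twist knot $K_{-2p}$ is hyperbolic in $S^3$. For a Legendrian representative $L$ of $K_{-2p}$ with $\mathrm{tb}(L)=-1$ and $\mathrm{rot}(L)=0$ and any nonnegative integers $a,b$, the stabilization $S_{+}^{a}S_{-}^{b}(L)$ has Thurston-Bennequin invariant $-1-a-b$, so Legendrian surgery along it is smoothly $(-2-a-b)$-surgery on $K_{-2p}$. By Theorem~\ref{trefoil_family}, every such contact 3-manifold has a unique Stein filling up to symplectic deformation.

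Next I would invoke Thurston's hyperbolic Dehn surgery theorem for the hyperbolic knot $K_{-2p}$: all but finitely many integer Dehn surgeries on $K_{-2p}$ yield closed hyperbolic 3-manifolds. Hence for all sufficiently large $N=a+b$, the underlying smooth 3-manifold obtained from Legendrian surgery on $S_{+}^{a}S_{-}^{b}(L)$ is hyperbolic, and these contact manifolds are supported by the open books $(\Sigma,\Phi)$ of Theorem~\ref{theorem_factorization_of_mapping_class}, so the uniqueness statement indeed applies to each of them.

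Finally, to extract infinitely many pairwise non-diffeomorphic examples, I would appeal to Mostow rigidity together with the quantitative part of Thurston's theorem: the hyperbolic volumes of these Dehn fillings converge monotonically from below to $\mathrm{vol}(S^3\setminus K_{-2p})$, and by the J\o rgensen--Thurston theorem any given volume is attained only finitely often along this sequence, so infinitely many of the resulting hyperbolic 3-manifolds are pairwise non-homeomorphic. Each of them carries a contact structure whose Stein filling is unique up to symplectic deformation, which is the desired conclusion. The real work is packaged inside Theorem~\ref{trefoil_family}, which is used as a black box here; the only mild obstacle in the corollary itself is confirming distinctness of the underlying 3-manifolds, and that is handled by the volume argument above.
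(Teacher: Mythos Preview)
Your proposal is correct and follows the same route as the paper: combine Theorem~\ref{trefoil_family} with Thurston's hyperbolic Dehn surgery theorem applied to a hyperbolic twist knot $K_{-2p}$ with $p>1$. The paper in fact treats the corollary as immediate and offers no further argument beyond the sentence preceding it.

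The one place where you do more than the paper is the distinctness step. Your volume argument via J\o rgensen--Thurston is valid, but it is heavier than necessary: since Legendrian surgery on $S_{+}^{a}S_{-}^{b}(L)$ is smoothly $(-2-a-b)$-surgery on a knot in $S^3$, the resulting manifold has $H_1\cong\mathbb{Z}/(2+a+b)\mathbb{Z}$, so different values of $a+b$ already give non-homeomorphic 3-manifolds. Either argument suffices; the paper simply leaves this implicit.
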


Rest of the paper is organized as follows. In Section~\ref{background} we recall some basic notions from mapping class groups and contact geometry. In Section~\ref{proofs_of_main_results} we give proofs of all our results.
\\

{\bf Acknowledgements}: Authors would like to thank John Etnyre and Dan Margalit for helpful conversations. In addition, authors would like to thank John Etnyre for thoroughly reading the first draft of the paper and giving invaluable comments.  This work was carried out while the second author was visiting Georgia Institute of Technology and he would like to thank them for their hospitality. The first author was partially supported by NSF grant DMS-0804820. The second author was partially supported by NSFC grant 11001171 and the China Scholarship Council grant 201208310626. Finally we would like to thank the referee for helpful comments and feedback.


\section{Background}
\label{background}

In this section, we recall some basic notions from mapping class groups and give an explicit presentation of the planar mapping class groups to be used throughout. We also describe a technique used to understand surgeries along Legendrian knots through open books.

\subsection{\textbf{Planar mapping class group}}

\label{planar_mapping_class_group}
Let $\mathbb{D}_n$ denote a sphere with $n+1$ open disks removed. We label the boundary components of $\mathbb{D}_n$ as $c_0, c_1,\dots,c_{n}$. By fixing an outer boundary component, denoted by $c_{0}$, we can embed $\mathbb{D}_n$ in $\mathbb{R}^2$. The mapping class group of $\mathbb{D}_n$ is a group of self diffeomorphisms of $\mathbb{D}_n$ up to isotopy such that each diffeomorphism fixes the boundary pointwise. We will denote the mapping class group by $Map(\mathbb{D}_n,\partial \mathbb{D}_n)$ and will call it planar mapping class group. It is a well known fact that the mapping class group of any surface is finitely presented, see \cite{Farb_Margalit_Primer}. We use a particular presentation of the mapping class group of a planar surface due to Margalit and McCommand, \cite{McCommand_Margalit_Braid_Group}.  We briefly describe their presentation.

For this presentation, we assume that the boundary components are arranged at vertices of a regular $n$-gon. We call a curve convex, if it is isotopic to the boundary of a convex hull of a collection of boundary components. A Dehn twist about a convex curve is called convex Dehn twist. According to \cite{McCommand_Margalit_Braid_Group}, mapping class group of $\mathbb{D}_n$ is generated by convex twists. The relations are given by

\begin{enumerate}

\item $\tau_{\mathcal{A}} \tau_{\mathcal{B}} = \tau_{\mathcal{B}} \tau_{\mathcal{A}}$ if and only if $\mathcal{A}$ is disjoint from $\mathcal{B}$. Here $\mathcal{A}$ and $\mathcal{B}$ are simple closed curves.

\item $\tau_{\mathcal{A}} \tau_{\mathcal{B}} \tau_{\mathcal{C}} \tau_{\mathcal{A \cup B \cup C}} = \tau_{\mathcal{A \cup B}} \tau_{\mathcal{B \cup C}} \tau_{\mathcal{A \cup C}}$, where $\mathcal{A}, \mathcal{B}, \mathcal{C}$ are disjoint collections of boundary components and Dehn twists are convex Dehn twist about them. In addition, we require that the boundary components are ordered such that the cyclic clockwise ordering of boundary components in $\mathcal{A}$  followed by those in $\mathcal{B}$ followed by those in $\mathcal{C}$ induces the cyclic clockwise ordering of boundary components in $\mathcal{A \cup B \cup C}$.

\end{enumerate}

Now we define homomorphisms from $Map(\mathbb{D}_n, \partial \mathbb{D}_n)$ to $\mathbb{Z}$, which define multiplicities associated to Dehn twists. Similar homomorphisms were defined in~\cite{Plamenevskaya_VHMorris_Planar_open_books}. Recall that $Map(\mathbb{D}_2, \partial \mathbb{D}_2)$ is isomorphic to $\mathbb{Z}^3$ which is generated by Dehn twists about each of the boundary components.  Let  $\Phi \in Map(\mathbb{D}_n, \partial \mathbb{D}_n)$ be a word written as product of positive Dehn twists. Let $c_i$ and $c_j$ be any boundary components in $\mathbb{D}_n$ other than $c_0$.

\begin{definition}[\textbf{Joint Multiplicity}] \label{joint_multiplicity}
Capping off all the boundary components of $\mathbb{D}_n$ except $c_i, c_j, c_{0}$ with disks, we obtain a map to $ \mathbb{Z}$ which just counts the number of Dehn twists about the curve parallel to the outer boundary $c_0$. We call this the \textit{joint multiplicity} of boundary  components $i$ and $j$ and denote it by $M_{i,j}(\Phi)$.

\end{definition}

\begin{definition}[\textbf{Mutiplicity}]\label{multiplicity}
Cap off all the holes except the boundary components $c_i$ and $c_{0}$. This induces a map from $Map(\mathbb{D}_n, \partial \mathbb{D}_n)$  to $Map(\mathbb{D}_1, \partial \mathbb{D}_1) \cong \mathbb{Z}$ and the map counts the Dehn twists about the boundary parallel curve. We call this the \textit{multiplicity} of the boundary component $c_i$. Denote it by $M_i(\Phi)$.
\end{definition}

\subsection{\textbf{Open book decompositions, Lefschetz fibrations and Stein fillings}}

We recall briefly notion of open book decompositions.  Given a closed oriented $3$-manifold $M$, an \textit{embedded open book decomposition} of $M$ is a pair $(B,\pi)$ where
\begin{itemize}
\item $B$ is an oriented  link in $M$ called the \textit{binding} of the open book decomposition and,

\item $\pi : M\setminus B \rightarrow S^1$ is a fibration such that $\pi^{-1}(\theta)$ is the interior of a compact surface $S_{\theta}$ with $\partial S_{\theta}=B$ for all $\theta\in S^1$.
\end{itemize}

Given an embedded open book decomposition as above,  since $\pi$ is a locally trivial fibration over $S^1$, it is completely specified by a diffeomorphism of the fiber surface $S$. To see this, think of $S^1$ as the interval $[0,2\pi]$, with end points identified. Since $[0,2\pi]$ is contractible any fiber bundle over it is trivial. Hence, the original fibration $ \pi: M\setminus B \rightarrow S^1$ can now be obtained by gluing the surfaces $S \times \lbrace 0 \rbrace$ and $S \times \lbrace 2 \pi \rbrace$. Hence we get an alternate description of an open book decomposition for $M$ called an \textit{abstract open book decomposition} which is defined as follows.
\begin{itemize}
\item $S$ is an oriented compact surface with boundary called the \textit{page} of	 the open book decomposition.

\item $\phi: S \rightarrow S$ is a diffeomorphism of $S$ such that $\phi|_{\partial S}$ is the identity. The diffeomorpshim $\phi$ is called the \textit{monodromy} of the open book decomposition.
\end{itemize}

Given an abstract open book decomposition $(S,\phi)$, we get a $3$-manifold $M_{\phi}$ as $S \times [0,1]/ \sim$, where $\sim$ is the equivalence relation $(x,1)\sim(\phi(x),0)  $ for $x \in S$, and $(y,t)\sim (y, t')$ for $y\in\partial S$, $t,t'\in [0,1]$.

Note that an abstract open book decomposition $(S,\phi)$ determines an oriented closed manifold $M_{\phi}$ and an embedded  open book decomposition $(B_{\phi}, \pi_{\phi})$ up to diffeomorphism.

For an embedded open book decomposition $(B,\pi)$, a \textit{positive stabilization} is an operation of plumbing a positive Hopf band to $(B,\pi)$.  For an abstract open book decomposition, a \textit{positive stabilization} changes the page $S$ by attaching a $1$-handle to get a new surface $S'$. The monodromy of the open book decomposition changes from $\phi$ to $\phi \cdot \tau_c$, where $c$ is curve which intersects the co-core of the attached $1$-handle exactly once and $\phi$ is extended to the new surface by identity. For an abstract open book decomposition $(S,\phi)$, a \textit{conjugation} is an operation to replace the monodromy $\phi$ by $f\phi f^{-1}$, where $f$ is a self-diffeomorphism of $S$ which is identity on the boundary.

A contact structure $\xi$ on $M$ is \textit{supported} by  an embedded  open book decomposition $(B,\pi)$  of $M$ if $\xi$ can be isotoped through contact structures so that there is a $1$-form $\alpha$ for $\xi$ such that:
\begin{itemize}
\item $\alpha > 0$ on $B$.
\item $d\alpha$ is a positive area form on each page $S_{\theta}$ of the open book decomposition.
\end{itemize}
A contact 3-manifold is \textit{supported} by  an abstract  open book decomposition $(S,\phi)$ of  if $\xi$ is supported by an embedded open book decomposition determined by $(S,\phi)$.

Given any abstract, or embedded open book decomposition $(B,\pi)$ of $M$, according to a construction of Thurston and Winkelnkemper \cite{Thurston_Wilkenkeper_Contact_structures_from_open_books}, it supports a contact structure $\xi$. Giroux proved that given a contact structure $\xi$ on $M$, one can find an embedded open book decomposition of $M$ supporting $\xi$.

\begin{theorem}[Giroux 2000, \cite{Giroux_Correspondence}]
\label{Giroux 1-1}
Let $M$ be a closed oriented 3-manifold. Then there is a one to one correspondence between the oriented contact structures on $M$ up to isotopy and the open book decompositions of $M$ up to positive stabilization, and there is a one to one correspondence between the oriented contact structures on $M$ up to isomorphism and the abstract open book decompositions of $M$ up to positive stabilization and conjugation.
\end{theorem}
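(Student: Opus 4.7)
The plan is to establish both directions of the correspondence using contact cell decompositions, a Legendrian-adapted CW structure introduced by Giroux.

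For the direction ``open book $\Rightarrow$ contact structure'', the Thurston--Winkelnkemper construction cited just above produces a supported contact structure from any open book. The key claim is that this contact structure is unique up to isotopy. For this I would observe that the space of $1$-forms $\alpha$ satisfying the two defining properties of a supporting contact form (positive on the binding $B$, and $d\alpha$ a positive area form on each page) is convex, hence connected, and then apply Gray stability to conclude that any two such contact structures are isotopic through supported contact structures.

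For the direction ``contact structure $\Rightarrow$ open book'', I would construct a contact cell decomposition of $(M,\xi)$: a finite CW decomposition in which (i) the $1$-skeleton $G$ is a Legendrian graph, (ii) each $2$-cell is a convex disk with Legendrian boundary of Thurston--Bennequin number $-1$, and (iii) on each $3$-cell $\xi$ agrees with the standard tight contact structure on $B^{3}$. The existence of such a decomposition follows by starting with a smooth triangulation, Legendrianizing the $1$-skeleton inside Darboux charts, and then making the $2$-cells convex via a $C^{0}$-small perturbation together with Legendrian realization to force the correct dividing set. A ribbon neighborhood $\Sigma$ of $G$ inside the contact planes is then a page of an open book whose binding is $\partial \Sigma$, and whose monodromy is determined by the attaching maps of the complementary $3$-cells, each of which is a standard contact $3$-ball.

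To complete the correspondence, I must show that two open books supporting isotopic contact structures differ by a sequence of positive stabilizations (and, in the abstract version, conjugations). The strategy is to realize each open book as the ribbon of a contact cell decomposition of $\xi$, to pass to a common refinement of the two decompositions, and then to verify that refining a contact cell decomposition by adding a single Legendrian arc or convex disk corresponds exactly to plumbing on a positive Hopf band, i.e.\ to a positive stabilization of the open book. The main obstacle, and the technical heart of Giroux's theorem, is the existence of this common refinement: it requires making the two $2$-skeleta simultaneously convex with controlled dividing sets, isotoping them into general position while preserving the contact cell axioms, and then verifying inductively that the resulting cell moves translate into stabilization moves on open books. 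Finally, for the abstract version, the monodromy is only defined up to the choice of identification of the page with a fixed model surface; this indeterminacy is precisely the conjugation ambiguity in the statement, and it accounts for the extra quotient when one passes from isotopy classes of contact structures to isomorphism classes.
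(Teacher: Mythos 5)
The paper does not prove this statement; it is quoted verbatim from Giroux's ICM announcement and used as a black box, so there is no internal argument to compare yours against. Your outline does follow the strategy Giroux announced (Thurston--Winkelnkemper plus Gray stability in one direction, contact cell decompositions and ribbons of Legendrian graphs in the other, common refinements for the stabilization equivalence), but as written it has two genuine gaps. First, the claim that the set of $1$-forms $\alpha$ with $\alpha>0$ on $B$ and $d\alpha>0$ on the pages is convex, while true, does not give you what you need: those two conditions do not imply that $\alpha$ is a contact form, and a convex combination of two \emph{adapted contact} forms $\alpha_0,\alpha_1$ need not be contact, because the kernel line of $d\bigl((1-t)\alpha_0+t\alpha_1\bigr)$ is transverse to the pages but there is no a priori reason for the interpolated form to be positive on it. The standard repair is to first replace each $\alpha_i$ by $\alpha_i+R\,\pi^{*}d\theta$ (suitably damped near the binding) for $R\gg 0$, check that this is an adapted contact form isotopic to $\alpha_i$ via Gray stability, and only then interpolate linearly; without this twisting step your connectedness argument does not close.

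Second, and more seriously, the ``one to one'' content of the theorem is exactly the assertion that any two supporting open books become isotopic after positive stabilizations, and your proposal only names the intended mechanism (common refinement of two contact cell decompositions, with each refinement move realized as a Hopf plumbing) while conceding that the existence of the common refinement is the technical heart. That step is not a routine general-position argument: one must isotope two $2$-skeleta to be simultaneously convex with controlled dividing sets while preserving the $tb=-1$ condition on every $2$-cell, and verify that every elementary subdivision changes the ribbon by a positive stabilization. This is precisely the part of Giroux's theorem whose complete written proof was not available for many years, so leaving it at the level of a stated plan means the proposal is a roadmap for the known proof rather than a proof. For the purposes of this paper the correct move is the one the authors make: cite the result and do not attempt to reprove it.
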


From now on, unless explicitly specified, an open book decomposition will mean the abstract one.

Following Theorem~\ref{Giroux 1-1}, Giroux \cite{Giroux_Correspondence}, Loi-Piergallini \cite{Loi_Piergallini_Lefschetz_fibrations}, Akbulut-Ozbagci \cite{Akbulut_Ozbagci_Stein_Surface_Lefschetz_fibrations} showed that a contact manifold $(M,\xi)$ is Stein fillable if and only if there exists an open book decomposition $(S, \phi)$ supporting $(M,\xi)$ such that the monodromy is written as a product of positive Dehn twists. The proof of this result tells us how to construct a Stein filling from $(S, \phi)$ and a given a positive factorization of $\phi$. We briefly recall it here. To do so we first need to recall a few facts about Lefschetz fibrations. We refer the reader to \cite{Gompf_Stipsicz} for more details.

Suppose $X$ is an oriented 4-manifold, and $D^2$ is an oriented 2-dimensional disk. The smooth map $f:X\rightarrow D^2$ is a \textit{Lefschetz fibration} if $df$ is onto with finitely many exceptional points in the interior of $D^2$, the map $f$ is locally trivial in the complement of these exceptional points, and around each of the exceptional points, $f$ can be modelled in some choice of
complex coordinates by $f(z_{1}, z_{2})=z_{1}^{2}+z_{2}^{2}$. If $x \in D^2$ is a regular value, then $f^{-1}(x)$ is a compact smooth surface $S$. This surface $S$ is called the \textit{regular fiber}. At each critical value $p \in D^2$, $f^{-1}(p)$ is a singular surface with exactly one positive transverse self-intersection point, we call it a \textit{singular fiber}. One can assume that all the critical values, $p_1,\dots, p_n$ are isolated and monodromy of the fibration around each critical value $p_i$, is specified by an element of $Map(S,\partial S)$. This element is given by a right handed Dehn twist about a non trivial curve $\alpha_i \subset S$. The \textit{global monodromy} of the fibration is given by the product of Dehn twists $\tau_{\alpha_1}, \tau_{\alpha_2}, \dots, \tau_{\alpha_n}$.

For positive factorizations, a \textit{Hurwitz move} is defined by  $$\tau_{\alpha_1} \dots \tau_{\alpha_i} \tau_{\alpha_{i+1}} \dots \tau_{\alpha_n} \sim  \tau_{\alpha_1} \dots  \tau_{\alpha_{i+1}} \tau_{\tau^{-1}_{\alpha_{i+1}}(\alpha_i)} \tau_{\alpha_{i+2}} \dots \tau_{\alpha_n}, $$ for  $1 \leq i < n$. Two positive factorizations are \textit{Hurwitz equivalent} if one of them can be obtained from the other by finitely many Hurwitz moves.

In addition, to remove the dependence on choice of the reference fiber, the Dehn twists are thought of as elements of the mapping class of an abstract surface $S$. This requires the choice of an identification diffeomorphism and induces an equivalence relation on the set of mapping class group factorizations: \textit{ global conjugation}. It is defined as $$\tau_{\alpha_1} \tau_{\alpha_2}  \dots \tau_{\alpha_n} \sim  \tau_{f(\alpha_1)} \tau_{f(\alpha_2)} \dots  \tau_{f(\alpha_n)}, $$ where $f$ is a diffeomorphism of $\Sigma$ which is an element of $Map(S,\partial S)$.

Given an abstract open book decomposition $(S,\phi)$ of the manifold $(M,\xi)$ and a positive factorization of $\phi$ by $\tau_{\alpha_1}  \dots \tau_{\alpha_n}$, where $\alpha_1,\dots,\alpha_n$ are nontrivial curves in $S$, one can construct a Lefschetz fibration $X$ which has global monodromy $\tau_{\alpha_1} \dots \tau_{\alpha_n}$, and is diffeomorphic to $(S\times D^2)\bigcup(\cup_{i=1}^{n} H_{i})$, where $H_i$ is a 4-dimensional 2-handle attached along $\alpha_{i}$ in a fiber of $S \times\partial D^{2} \rightarrow \partial D^2$ with relative framing $-1$. Note that the diffeomorphism type of $X$ is unique up to Hurwitz equivalence and global conjugation of the positive factorization of $\phi$. According to \cite{Eliashberg_Stein_handle_attachment}, $X$ admits a Stein structure, and $\partial X$  is diffeomorphic to $M$. Furthermore, the contact structure on $M$ induced by this Stein filling agrees with the contact structure supported by the open book decomposition $(S,\phi)$ through the Giroux correspondence. Hence, $ X $ gives a Stein filling of $(M,\xi)$.

Conversely, given a Stein filling, $X$, of $(M,\xi)$, one can construct a Lefschetz fibration of $X$ (see \cite{Akbulut_Ozbagci_Stein_Surface_Lefschetz_fibrations, Loi_Piergallini_Lefschetz_fibrations}) such that $\partial X = M$ has a natural open book decomposition with monodromy written as a positive factorization and supports $\xi$.
Hence to get an upper bound on the number of Stein fillings of $(M, \xi)$, one will have to find all compatible open books and then find all possible ways of factorizing a given monodromy in terms of positive Dehn twists. But in the case of manifolds supported by planar open book decompositions this problem is approachable due to the following theorem of Wendl.

\begin{theorem}[Wendl 2010, \cite{Wendl_Planar_open_books}]
\label{Wendl_Planar_open_books}
Suppose $(M,\xi)$ is supported by a planar open book decomposition. Then every strong symplectic filling $(X,\omega)$ of $(M,\xi)$ is symplectic deformation equivalent to a blow-up of an allowable Lefschetz fibration compatible with the given open book decomposition of $(M,\xi)$.
\end{theorem}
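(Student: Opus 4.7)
The plan is to use a moduli space of pseudoholomorphic curves in the filling with boundary conditions dictated by the planar open book. First I would normalize the setup: fix a contact form $\alpha$ on $M$ adapted to the planar open book $(\Sigma,\phi)$ so that the binding components $B$ are elliptic Reeb orbits and each page is a transverse section of the Reeb flow, then attach a cylindrical end $[0,\infty)\times M$ to $X$ and choose an $\omega$-compatible almost complex structure $J$ which restricts to an $\mathbb{R}$-invariant contact-type $J$ on the end. With this choice the closed pages of the open book lift to a smooth $\mathbb{R}$-family of finite-energy $J$-holomorphic planes/punctured spheres in $\mathbb{R}\times M$, each asymptotic at its punctures to covers of the binding orbits.

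Next I would study the moduli space $\mathcal{M}$ of $J$-holomorphic curves in $X\cup[0,\infty)\times M$ which are homologous to a page, have genus zero, and are asymptotic to the same tuple of simply covered binding orbits. A Fredholm/index computation using the Conley--Zehnder indices of the binding orbits shows that the virtual dimension of $\mathcal{M}$ equals $2$, matching the page count. The point is to prove: (i) unobstructedness, so that $\mathcal{M}$ is a smooth $2$-manifold at every somewhere-injective curve, using automatic transversality in genus zero; (ii) embeddedness and pairwise disjointness (or controlled intersection) of generic curves via Siefring's intersection theory for punctured curves, where positivity of asymptotic intersection numbers forces two distinct curves in $\mathcal{M}$ either to coincide or to be disjoint in $X$; and (iii) that each regular point of $X$ lies on exactly one curve of $\mathcal{M}$, by showing the evaluation map is proper of degree one onto the interior.

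The heart of the proof, and the main obstacle, is the compactness analysis. Applying SFT/Bourgeois--Eliashberg--Hofer--Wysocki--Zehnder compactness, a sequence in $\mathcal{M}$ limits to a holomorphic building; I must rule out all non-trivial breaking in the cylindrical end and control bubbling in the interior. Planarity is essential here: the asymptotic orbits are simply covered elliptic orbits of a planar open book, so no multiply covered breaking can occur above $M$, and any level in the symplectization must consist of trivial cylinders together with a copy of a page. This forces every non-smooth limit in $\mathcal{M}$ to be a nodal curve lying entirely in $X$, with each node a transverse double point. The collection of such nodal curves then partitions $X$ into the fibers of a singular fibration whose critical points are ordinary double points, i.e.\ a Lefschetz fibration compatible with the open book on $\partial X$. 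If sphere bubbles of self-intersection $-1$ appear, they correspond exactly to blow-ups, which is precisely the statement.

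Finally I would conclude the symplectic deformation statement by running the above argument along a path of $\omega$-compatible $J$'s: the parametric moduli space deforms $(X,\omega)$ through symplectic manifolds to the total space of the constructed Lefschetz fibration with its standard Gompf symplectic form, giving a symplectic deformation equivalence after blowing down the exceptional spheres. The only subtlety is verifying allowability, i.e.\ that no vanishing cycle is nullhomotopic in a page; but a nullhomotopic vanishing cycle would produce a sphere of square $-1$ in a fiber, which by the compactness analysis again corresponds to a blow-up and is removed by passing to the blow-down.
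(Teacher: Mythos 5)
This statement is not proved in the paper at all: it is Wendl's theorem, imported verbatim from \cite{Wendl_Planar_open_books} and used as a black box, so the only meaningful comparison is with Wendl's original argument. Your outline does follow that argument's actual architecture (a $J$-holomorphic foliation of the completed filling by planar pages asymptotic to the binding, SFT compactness, nodal degenerations supplying the Lefschetz critical points, exceptional spheres accounting for blow-ups, and a parametric argument for the deformation statement), so the approach is the right one and not a genuinely different route.

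As a proof, however, several load-bearing steps are asserted rather than established. First, the existence of the holomorphic page family is not automatic for ``an $\mathbb{R}$-invariant contact-type $J$'': a generic such $J$ will not make the pages holomorphic, and one must explicitly construct a nondegenerate contact form and an adapted $J$ near the binding (together with the asymptotic analysis at the elliptic binding orbits) so that the pages lift to finite-energy curves; this construction is a substantial part of Wendl's work and is where the hypotheses on the open book enter. Second, SFT compactness requires a uniform energy bound on the moduli space, which must be extracted from the strong filling condition; you never invoke it. Third, the claim that ``no multiply covered breaking can occur above $M$'' is not the actual mechanism: what rules out nontrivial symplectization levels is an index count combined with automatic transversality for genus-zero curves in dimension four (this is precisely where planarity is indispensable, since positive-genus components would destroy the transversality criterion), and your sketch names the tools without doing the bookkeeping that forces every nonconstant level above $X$ to be a union of trivial cylinders. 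Finally, passing from ``a family of nodal curves filling $X$'' to ``a Lefschetz fibration with local model $z_1^2+z_2^2$ and a symplectic form deformation equivalent to a Gompf form on the blow-down'' requires showing the evaluation/projection map is smooth with the correct local structure at nodes, handling fibers with several nodes, and producing the deformation of $\omega$; each of these is a theorem, not an observation. None of these gaps indicates a wrong approach, but filling them is essentially the content of Wendl's paper.
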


So in case of contact manifolds supported by planar open book decompositions, this theorem tells us that we  need to find all possible ways of factorizing the given monodromy in terms of positive Dehn twists, to get an upper bound on the number of Stein fillings. Then one can try to classify the Stein fillings by realizing this upper bound. This  approach to classifying Stein fillings has proved to be successful in \cite{Plamenevskaya_VHMorris_Planar_open_books,Kaloti_Stein_fillings_of_planar_open_books}.

\subsection{\textbf{Open book decompositions for manifolds obtained by surgery along knots.}}

It follows from Giroux correspondence that any Legendrian knot $L$ in $(M,\xi)$ can be embedded in a page of an open book supporting $\xi$ as an essential simple closed curve, see \cite{Etnyre_Lectures_on_open_book_decompositions}. One can prove that if $L$ is a Legendrian knot on a page of open book $(S,\phi)$ supporting $(M,\xi)$ and $(M_{(-1)}(L), \xi_L)$ is the contact structure obtained from $(M, \xi)$ by Legendrian surgery along $L$, then $(M_{(-1)}(L), \xi_L)$ is supported by the open book decomposition $(S,\phi \cdot \tau_L)$. Here $L$ is thought of as a simple closed curve embedded in the surface $S$.

Given a Legendrian knot $L$, there is a natural operation called positive/negative stabilization of $L$ that can be used to get another Legendrian knot in the same knot type. For a Legendrian knot in $\mathbb{R}^3$ with its standard contact structure, positive (resp. negative) stabilization is achieved by adding a zigzag to the front projection of the Legendrian knot such that rotation number of the Legendrian knot increases (resp. decreases)  by $1$. Stabilization is a well-defined operation, that is, it does not depend on the point at which zigzags are added.   See \cite{Etnyre_Lectures_on_open_book_decompositions} for details and proof of the following lemma.

\begin{lemma}
\label{lemma:knot_stabilization}
Let $(S, \phi)$ be an open book decomposition supporting the contact structure $\xi$ on $M$. Suppose $L$ is a Legendrian knot in $M$ that lies in the page $S$. If we stabilize $(S,\phi)$ as shown in Figure~\ref{Knot_stabilization}, then we may isotope the page of the open book so that positive (negative) stabilization of $L$ appears on the page $S$ as shown in Figure~\ref{Knot_stabilization}.
\end{lemma}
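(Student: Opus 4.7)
The plan is to work directly in the contact manifold by realizing the page of the open book as a convex surface so that the Legendrian knot $L$ sits on the page with its contact framing equal to the page framing. This is the standard setup (see \cite{Etnyre_Lectures_on_open_book_decompositions}): choose a contact form $\alpha$ adapted to $(S,\phi)$ so that each page is convex, the binding is Legendrian, and any simple closed curve on a page can be Legendrian-realized. Having done this, the Thurston--Bennequin invariant of $L$ is computed as the page-framing minus the curve's self-intersection data, and the rotation number is computed from the relative Euler class of $\xi$ over the page.

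Next, I would describe the positive stabilization of $(S,\phi)$ as plumbing a positive Hopf band: attach a $1$-handle $H$ to $S$ along two points near $L$ (say, at the endpoints of a small arc $\gamma$ transverse to $L$ in $S$), producing a new surface $S'$, and adjoin a positive Dehn twist $\tau_c$ where $c$ is the simple closed curve running once over the co-core of $H$ and closing up along $\gamma$. Under Giroux's correspondence, $(S',\phi\cdot\tau_c)$ supports the same contact structure $\xi$, and moreover one may arrange that the attaching region of $H$ sits inside a contact Darboux chart near a point of $L$. Now isotope $L$ across $\gamma$ and push one strand over the new handle $H$, so that on $S'$ it appears as the concatenation of the old $L$ (rerouted along a cocore arc) with a small loop going over $H$ in one of the two possible ways; this is the modification depicted in Figure~\ref{Knot_stabilization}.

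The remaining and key step is to verify that this on-page modification is precisely a positive or negative Legendrian stabilization of $L$ in $(M,\xi)$, with the sign determined by which way the strand crosses the new handle. For this one computes the change in the page framing (i.e.\ $\mathrm{tb}$) and the change in the rotation number from the modified curve on $S'$: going once over the $1$-handle alters the page framing by $-1$ (coming from the $-1$ relative framing with which the Hopf-band $2$-handle would be attached in the associated Lefschetz picture), matching the $\mathrm{tb}$ drop of a Legendrian stabilization, while the rotation number either increases or decreases by $1$ depending on the orientation with which the strand traverses $H$, exactly as for positive versus negative zigzags in a front projection. Topologically, $L$ before and after stabilization of the open book are isotopic as knots in $M$, so the only thing changing is the contact framing and rotation; thus the resulting Legendrian is a Legendrian stabilization of the original $L$ in the prescribed knot type.

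The main obstacle I expect is a bookkeeping one rather than conceptual: tracking the orientation conventions carefully enough to match ``passing over $H$ in one direction'' with positive stabilization and the other direction with negative stabilization. The cleanest way to handle this is to reduce to the local model in a Darboux ball where the page appears as a strip in the front projection, and verify by a direct picture that the on-page modification coincides with adding an up- or down-zigzag to the front of $L$; this local verification then globalizes since both open-book stabilization and Legendrian stabilization are supported in arbitrarily small neighborhoods.
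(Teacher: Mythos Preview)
The paper does not give its own proof of this lemma; it simply states it and refers the reader to \cite{Etnyre_Lectures_on_open_book_decompositions} for the argument. Your sketch is essentially the standard argument found there: realize $L$ on a page with contact framing equal to page framing, plumb a positive Hopf band near $L$, push a strand of $L$ over the new $1$-handle, and check in a local Darboux model that this modification is an up- or down-zigzag in the front, hence a positive or negative Legendrian stabilization. So there is nothing to compare beyond noting that your outline matches the cited source.

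Two small corrections worth making before you write this up carefully. First, in the Thurston--Winkelnkemper construction the binding is a \emph{transverse} link, not Legendrian, and the pages are not convex surfaces in the usual sense (they have boundary on the binding); what one actually uses is that any non-isolating simple closed curve on a page can be Legendrian-realized with contact framing equal to page framing. Second, your phrase ``page-framing minus the curve's self-intersection data'' is garbled: for a Legendrian $L$ sitting on a page, the contact framing \emph{equals} the page framing, so $\mathrm{tb}(L)$ is simply the page framing measured against the Seifert framing. Neither of these affects the logic of your argument, and the final reduction you propose---working in a Darboux ball where the page is a strip in the front projection and matching the handle-slide to a zigzag---is exactly the clean way to pin down the sign.
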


\begin{figure}[htb]
\begin{overpic}
{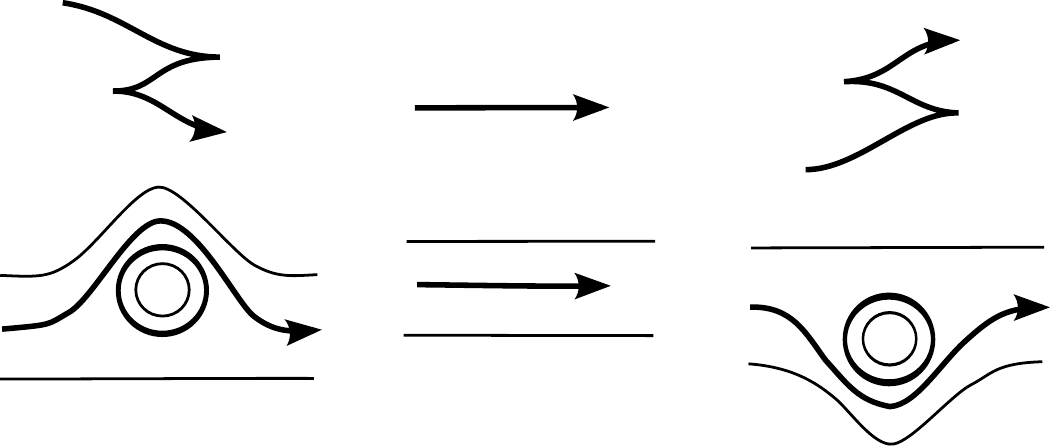}
\put(55, 30){$+$}
\put(270, 40){$+$}
\end{overpic}
\caption{Stabilizing the knot on the page of an open book. Middle figure shows a Legendrian knot on a page of open book. Left figure shows how to stabilize the knot positively while the right figure shows how to stabilize the knot negatively.}
\label{Knot_stabilization}
\end{figure}

\section{\textbf{Classification of Stein fillings}}

\label{proofs_of_main_results}

We begin by observing a purely combinatorial lemma. The purpose of this lemma is to get restrictions on the curves which can appear in any positive factorization of the given monodromy in terms of Dehn twists. Refer to Figure~\ref{fig:p14} for the notation used below.

\begin{lemma}
\label{lemma_combinatorial}
Any positive factorization of $\Phi$ must be given by the product of Dehn twists  $\tau_{1}^{m_1},$ $\tau_{2}^{m_2},$ $\ldots,$ $\tau_{n+q-1}^{m_{n+q-1}}, $ $\tau_{n+q+1}^{m_{n+q+1}},$ $\ldots, $ $\tau_{n+p+q}^{m_{n+p+q}}$, and the Dehn twists $\tau_{B'_1}$ and $\tau_{B'_2}$ where $B'_1$ encloses the same holes as  $B_1$, and $B'_2$ the same holes as $B_2$.
\end{lemma}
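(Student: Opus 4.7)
The strategy is to use the multiplicity and joint-multiplicity homomorphisms from Definitions~\ref{multiplicity} and~\ref{joint_multiplicity} as rigidity invariants, together with the planar mapping class group presentation recalled in Section~\ref{planar_mapping_class_group}. Writing $\mathcal{B}_\ell$ for the set of boundary components enclosed by $B_\ell$ ($\ell=1,2$), a direct computation from the given factorization yields
\[
M_i(\Phi) = m_i + \mathbf{1}[c_i \in \mathcal{B}_1] + \mathbf{1}[c_i \in \mathcal{B}_2] \qquad (i\neq n+q),
\]
together with $M_{n+q}(\Phi) = \mathbf{1}[c_{n+q}\in\mathcal{B}_1]+\mathbf{1}[c_{n+q}\in\mathcal{B}_2]$, and
\[
M_{i,j}(\Phi) = \mathbf{1}[\{c_i,c_j\}\subseteq\mathcal{B}_1]+\mathbf{1}[\{c_i,c_j\}\subseteq\mathcal{B}_2] \in \{0,1,2\}.
\]
These values must be matched by any positive factorization $\Phi=\prod_s\tau_{\gamma_s}$, since a convex Dehn twist $\tau_\gamma$ contributes $1$ to $M_i$ exactly when $\gamma$ encloses $c_i$ and $1$ to $M_{i,j}$ exactly when $\gamma$ encloses both $c_i$ and $c_j$.

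The first step is to exploit pairs with $M_{i,j}(\Phi)=0$, i.e., pairs not jointly contained in $\mathcal{B}_1$ nor $\mathcal{B}_2$: no curve $\gamma_s$ may enclose such a pair. Choosing one hole in $\mathcal{B}_1\setminus\mathcal{B}_2$ together with one in $\mathcal{B}_2\setminus\mathcal{B}_1$, or any hole outside $\mathcal{B}_1\cup\mathcal{B}_2$ paired with another hole in the complement, this constraint forces every multi-hole $\gamma_s$ to have its enclosed hole set contained entirely in $\mathcal{B}_1$ or entirely in $\mathcal{B}_2$.

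The main obstacle, as I see it, is ruling out lantern-type rearrangements inside each $\mathcal{B}_\ell$, since the generalized lantern relation
\[
\tau_{\mathcal{A}}\tau_{\mathcal{B}}\tau_{\mathcal{C}}\tau_{\mathcal{A}\cup\mathcal{B}\cup\mathcal{C}} = \tau_{\mathcal{A}\cup\mathcal{B}}\tau_{\mathcal{B}\cup\mathcal{C}}\tau_{\mathcal{A}\cup\mathcal{C}}
\]
is the only non-commuting defining relation in the presentation of Section~\ref{planar_mapping_class_group} and preserves all of the $M_i$ and $M_{i,j}$. To preclude such trades, I would impose the joint-multiplicity equation $\sum_{s:\{c_i,c_j\}\subseteq\gamma_s}1=M_{i,j}(\Phi)$ pair by pair within each $\mathcal{B}_\ell$: pairs inside $\mathcal{B}_1\cap\mathcal{B}_2$ give $M_{i,j}=2$ and so must be enclosed by exactly two distinct curves, one drawn from each family; pairs inside $\mathcal{B}_\ell\setminus\mathcal{B}_{3-\ell}$ give $M_{i,j}=1$ and so must be enclosed by a unique curve in the $\mathcal{B}_\ell$-family. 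Combined with the convexity/cyclic-order restriction on curves recalled in Section~\ref{planar_mapping_class_group}, I expect this system of equations to force a single curve $B'_\ell$ enclosing all of $\mathcal{B}_\ell$ in each family, ruling out any lantern decomposition.

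Finally, once the multi-hole twists in the factorization are identified as exactly $\tau_{B'_1}$ and $\tau_{B'_2}$, the multiplicities of the remaining boundary-parallel twists are pinned down by matching $M_i(\Phi)$ against the expression computed in the first paragraph, yielding exactly $m_i$ copies of $\tau_i$ for each $i\neq n+q$ and none for $i=n+q$, which is the desired conclusion.
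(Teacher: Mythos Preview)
Your overall plan---compute the multiplicities $M_i(\Phi)$ and joint multiplicities $M_{i,j}(\Phi)$, then match them against an arbitrary positive factorization---is exactly the paper's strategy, and your first reduction (every multi-hole curve in the factorization must have its enclosed holes contained in $\mathcal{B}_1$ or in $\mathcal{B}_2$) is correct. The gap is in your middle step, where you try to rule out ``lantern-type rearrangements.''

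Two concrete problems there. First, you invoke ``the convexity/cyclic-order restriction on curves'' from Section~\ref{planar_mapping_class_group}. That restriction is a choice of \emph{generating set} for the presentation; it is not a constraint on the curves appearing in an arbitrary positive factorization of $\Phi$. The $\gamma_s$ in $\Phi=\prod_s\tau_{\gamma_s}$ can be any essential simple closed curves, so convexity cannot be used to cut down configurations. Second, the sentence ``I expect this system of equations to force a single curve $B'_\ell$ enclosing all of $\mathcal{B}_\ell$'' is precisely the assertion to be proved, and you have not supplied the argument. Without it there is nothing preventing, say, three or more multi-hole curves whose hole sets sit inside $\mathcal{B}_1$ or $\mathcal{B}_2$ and whose pairwise overlaps reproduce all the $M_{i,j}$.

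The missing idea is to exploit the distinguished hole $c_{n+q}$: it is the unique hole with no boundary-parallel twist in $\Phi$, so $M_{n+q}(\Phi)=2$ and hence \emph{exactly two} curves in any positive factorization enclose $c_{n+q}$. Call them $B'_1,B'_2$. Now the joint multiplicities with $c_{n+q}$ do all the work: $M_{i,n+q}=2$ for $i\in\{k,\dots,k+q-1\}$ forces both curves to contain those holes; $M_{j,n+q}=0$ for $j\in\{n+q+1,\dots,n+p+q\}$ excludes those holes from both; $M_{r,n+q}=1$ for $r\in\{1,\dots,k-1,k+q,\dots,n+q-1\}$ puts each such $c_r$ in exactly one of the two curves; and $M_{s,t}=0$ for $s\in\{1,\dots,k-1\}$, $t\in\{k+q,\dots,n+q-1\}$ separates these two blocks between the two curves. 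This pins down the hole sets of $B'_1,B'_2$ completely. A short joint-multiplicity count then shows any \emph{third} multi-hole curve would force some $M_{j,h}$ to exceed its value, finishing the argument. Your final paragraph (reading off the $m_i$ boundary twists from the $M_i$) is then correct.
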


\begin{proof} Recall from Section~\ref{background} that, $M_{i,j}$ denotes the joint multiplicity of the mapping class $\Phi$ about the $i^{th}$ and $j^{th}$ boundary components, and $M_{i}$ denotes the multiplicity of the mapping class $\Phi$ about the $i^{th}$ boundary component.

Since $M_{n+q}=2$ and $M_{i, n+q}=2$ for $i\in\{k, k+1, \ldots, k+q-1\}$, there are exactly two monodromy curves, say $B'_1$ and $B'_2$, enclosing $c_{n+q}$ and $c_{i}$  for $i\in\{k, k+1, \ldots, k+q-1\}$. Since $M_{n+q, j}=0$ for $j\in\{n+q+1, n+q+2, \ldots, n+q+p\}$,  $M_{r, n+q}=1$ for $r\in\{1, \ldots, k-1, k+q,\ldots, n+q-1\}$, and $M_{s, t}=0$ for $s\in\{1, \ldots, k-1\}$ and $t\in\{k+q,\ldots, n+q-1\}$, the two monodromy curves $B'_1$ and $B'_2$ enclose $\{c_k, \ldots,  c_{k+q-1}, c_{k+q}, \ldots, c_{n+q-1}, c_{n+q}\}$ and  $\{c_1,  \ldots, c_{k-1}, c_{k}, \ldots, c_{k+q-1}, c_{n+q}\}$  respectively.

For $j\in\{n+q+1, n+q+2, \ldots, n+q+p\}$,  $M_{j}=m_{j}$ and $M_{i, j}=0$ for any $i\in\{1,2,\ldots, n+q+p\}$ and $i\neq j$. So $c_{j}$ is enclosed solely by $m_j$ boundary parallel monodromy curves.

For $i\in\{1,\ldots, n+q-1\}$, there are no non-boundary-parallel monodromy curves, other than $B'_1$ and $B'_2$, enclosing $c_i$.  Suppose otherwise, then for some  $j, h\in\{1,\ldots, n+q-1\}$,  there is a monodromy curve, other than $B'_1$ and $B'_2$,  enclosing $c_j$ and $c_h$.
If either $j$ or $h$ does not belong to $\{k, k+1, \ldots, k+q-1\}$, then  $j$ and $h$ cannot belong to $\{1, \ldots, k-1\}$ and $\{k+q,\ldots, n+q-1\}$, respectively. So $M_{j,h}\geq 2$. However, from the original positive decomposition of $\Phi$, we have $M_{j,h}=1$. So we arrive at a contradiction. If both $j$ and $h$ belong to $\{k, k+1, \ldots, k+q-1\}$, then $M_{j,h}\geq 3$. However, also from the original positive decomposition of $\Phi$, we have $M_{j,h}=2$. So we arrive at a contradiction as well.

Hence for $i\in\{1,\ldots, n+q-1\}$, there are $m_{i}$ boundary parallel monodromy curves enclosing $c_i$.
\end{proof}

\begin{remark}
With the notation as in above lemma, any other factorization of $\Phi$ can be written as $\tau_{1}^{m_1} \tau_{2}^{m_2} \ldots \tau_{n+q-1}^{m_{n+q-1}} \tau_{n+q+1}^{m_{n+q+1}} \ldots \tau_{n+p+q}^{m_{n+p+q}} \tau_{B'_1} \tau_{B'_2}$ up to Hurwitz equivalence. To see this, recall that since boundary Dehn twists commute with every diffeomorphism we can move them all to the left in the factorization as written above. Now product of Dehn twists $\tau_{B'_1}$ and $ \tau_{B'_2}$ is on the right side of this positive factorization. Hurwitz move on the product of Dehn twists can potentially change the homotopy class of both the curves $B'_1$ and $B'_2$, to say $B''_1$ and $B''_2$. But still $B''_i$ enclose the same set of holes as $B'_i$ for $i =1,2$. With abuse of notation we still call these new set of curves as $B'_1$ and $B'_2$, as Lemma~\ref{lemma_combinatorial} only specifies the curves $B'_1$ and $B'_2$ up to the set of holes enclosed by each of these curves. So using commutativity of boundary parallel Dehn twists and Hurwitz moves one can arrange the factorization as above.

In our case, since we only have two non boundary parallel monodromy curves, a Hurwitz move is also a global conjugation.
\end{remark}

\subsection{Positive factorizations.}

In this subsection, we prove that $\tau_{B_1}\tau_{B_2}$ has at most $2$ different positive factorizations, up to a global conjugation, in $Map(\Sigma, \partial \Sigma)$ for some simple cases of the surface $\Sigma$. We will reduce the above factorization problem to these simple cases later.

In proving these results, first step will be to get restrictions on intersection number of curves $B'_1$ and $B'_2$. To make sense of intersection numbers of curves, we assume for the rest of the paper that any two curves are isotoped to intersect minimally.

\begin{lemma}
\label{lemma_uniqueness_of_positive_factorization}
Let $\Sigma$ be the surface in Figure~\ref{fig:p14} with  $k=1$ and  $n=1$. Suppose $p=q=1$.  Then $\tau_{B_1} \tau_{B_2}\in Map(\mathbb{D}_3, \partial \mathbb{D}_3)$  has at most two positive factorizations up to a global conjugation.
\end{lemma}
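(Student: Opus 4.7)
My plan is to deduce this bound by specializing Lemma~\ref{lemma_combinatorial} to the case $\Phi = \tau_{B_1}\tau_{B_2}$ (i.e.\ with all boundary-twist multiplicities $m_i = 0$) and then invoking the rigidity of essential simple closed curves in a planar surface. The joint-multiplicity bookkeeping in the proof of Lemma~\ref{lemma_combinatorial} goes through unchanged in this degenerate case, so any positive factorization of $\tau_{B_1}\tau_{B_2}$ in $Map(\mathbb{D}_3,\partial\mathbb{D}_3)$ must have the form $\tau_{B_1'}\tau_{B_2'}$, where $B_1'$ and $B_2'$ are essential simple closed curves enclosing the same boundary components as $B_1$ and $B_2$ respectively. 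No boundary-parallel factors can appear, since the $M_i$ budget is already saturated by the two non-boundary-parallel twists.

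Specializing to $k = n = p = q = 1$, the surface $\Sigma$ has exactly the four boundary components $c_0, c_1, c_2, c_3$, and one reads off from the description in Lemma~\ref{lemma_combinatorial} that both $B_1$ and $B_2$ enclose exactly the pair $\{c_1,c_2\}$. In a sphere with holes, the isotopy class of an essential, non-boundary-parallel simple closed curve is determined by the bipartition of boundary components it induces. Hence there is a unique isotopy class $B$ of curve separating $\{c_1,c_2\}$ from $\{c_0,c_3\}$ in $\mathbb{D}_3$, and both $B_1'$ and $B_2'$ must lie in this class. The only positive factorization is therefore $\tau_B\cdot\tau_B$, which gives one (and in particular at most two) factorization up to global conjugation.

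The main technical content is already packaged in Lemma~\ref{lemma_combinatorial}: the multiplicity constraints $M_{13} = M_{23} = M_3 = 0$ together with $M_{12} = 2$ rule out every candidate curve other than one enclosing $\{c_1,c_2\}$, and planar curve rigidity then finishes the argument. I do not foresee a serious obstacle here; the bound ``at most two'' in the statement (rather than ``exactly one'') appears to be a conservative phrasing matching the template of more delicate variants proved later in the paper, where two genuinely distinct factorizations can arise after the holes are de-degenerated.
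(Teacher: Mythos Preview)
Your argument is correct and considerably simpler than the paper's. With $k=n=p=q=1$ the multiplicity computation in Lemma~\ref{lemma_combinatorial} indeed forces both $B_1'$ and $B_2'$ to enclose exactly $\{c_1,c_2\}$ and to avoid $c_3$; since isotopy classes of essential simple closed curves on a planar surface are determined by the induced partition of boundary components, both curves lie in the single class $B$ separating $\{c_1,c_2\}$ from $\{c_0,c_3\}$, and the factorization is forced to be $\tau_B^2$.

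The paper instead treats this lemma with the same pseudo-Anosov machinery (Thurston's representation $\rho:\langle\tau_{B_1},\tau_{B_2}\rangle\to PSL(2,\mathbb R)$) that it genuinely needs for Lemmas~\ref{general_uniqueness_of_factorization} and~\ref{more_general_uniqueness_of_factorization}, where $B_1$ and $B_2$ enclose \emph{different} sets of holes and really do fill the surface. In the degenerate case $k=n=1$ this machinery is not only unnecessary but appears to misfire: the proof asserts $I(B_1,B_2)=4$, yet on a four-holed sphere any two essential simple closed curves have geometric intersection number $0$ or $2$, and here $B_1$ and $B_2$ are in fact isotopic, so $\tau_{B_1}\tau_{B_2}=\tau_B^2$ is reducible rather than pseudo-Anosov. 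Your elementary route sidesteps this issue entirely and sharpens the conclusion to a unique factorization; the paper's heavier template only becomes essential once the bottom or top row of holes in Figure~\ref{fig:p14} is nonempty, so that $B_1$ and $B_2$ separate distinct pairs.
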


\begin{proof}
Since $k=n=1$, there is no hole which is enclosed by $B_2$ but not by $B_1$, and there is no hole which is enclosed by $B_1$ but not by $B_2$.

By Lemma~\ref{lemma_combinatorial}, any positive factorization of $\tau_{B_1} \tau_{B_2}$ is $\tau_{B'_1} \tau_{B'_2}$ up to a global conjugation, where $B'_1$ and $B'_2$ enclose the same set of holes as $B_1$ and $B_2$, respectively.

We would like to get more information about the curves $B'_1$ and $B'_2$. To do that, we use a theorem by Thurston \cite[Theorem 7]{thurston_geometry_and_dynamics_of_diffeomorphism_of_surface}. This was suggested by Dan Margalit to us.

We assume that each of the boundary components is filled by a disk with one puncture. To avoid confusion, we will call this surface $\mathbb{D}_3$ still. Note that the curves $B_1$ and $B_2$ fill the surface $\mathbb{D}_3$. As explained in~\cite[Expos\'{e}~$13$]{FLP_Thurston's_work_on_surfaces}, one can construct a singular flat Euclidean structure and a representation of the subgroup of $Map(\mathbb{D}_3, \partial \mathbb{D}_3)$ generated by $\tau_{B_1}$ and $\tau_{B_2} $. Precisely (see \cite[Theorem 7]{thurston_geometry_and_dynamics_of_diffeomorphism_of_surface}, and  \cite[Theorem 14.1]{Farb_Margalit_Primer}),  suppose $I(B_1,B_2)$ denote the geometric intersection number of $B_1$ and $B_2$, then there is a representation
$\rho: \langle \tau_{B_1}, \tau_{B_2} \rangle \rightarrow PSL(2,\mathbb{R})$ given by
\begin{center}
$\tau_{B_1}  \mapsto \begin{bmatrix} 1 & I(B_1,B_2) \\ 0 & 1  \end{bmatrix}, \tau_{B_2} \mapsto \begin{bmatrix} 1 & 0 \\ -I(B_1,B_2) & 1 \end{bmatrix}$
\end{center}
with the following properties:

\begin{enumerate}

\item An element $g \in \langle \tau_{B_1}, \tau_{B_2} \rangle$ is periodic, reducible, or pseudo-Anosov according to whether $\rho(g)$ is elliptic, parabolic, or hyperbolic.

\item When $\rho(g)$ is hyperbolic the pseudo-Anosov mapping class $g$ has stretch factor equal to the larger of the absolute values of the two eigenvalues of $\rho(g)$.

\end{enumerate}

The matrix $\rho(g)$ is called an affine representative for $g$. So  $\tau_{B_1}$ has affine representative given by $\begin{bmatrix}
1 & 4 \\
0 & 1
\end{bmatrix} $,
and $\tau_{B_2}$ has an affine representative $
\begin{bmatrix}
1 & 0 \\
-4 & 1
\end{bmatrix}
$.
Thus, we obtain an affine representative for $\tau_{B_1} \tau_{B_2}$. It is $
\begin{bmatrix}
-15 & 4 \\
-4 & 1
\end{bmatrix}$.
This matrix has trace $-14$, so $\tau_{B_1} \tau_{B_2}$ has a pseudo-Anosov representative with stretch factor the larger of the absolute values of the two eigenvalues, that is $7+4\sqrt{3}$.  Note that the stretch factor of a pseudo-Anosov representative of a pseudo-Anosov diffeomorphism is in fact an invariant of the pseudo-Anosov diffeomorphism. This is because two homotopic pseudo-Anosov representatives are conjugate by a diffeomorphism isotopic to the identity (\cite[Th\'{e}or\`{e}me 12.5]{FLP_Thurston's_work_on_surfaces}), and any two conjugate pseudo-Anosov representatives have the same stretch factors (\cite[Page 406]{Farb_Margalit_Primer}).

Since $\tau_{B'_1} \tau_{B'_2}$, as a conjugation of $\tau_{B_1} \tau_{B_2}$, is also pseudo-Anosov, $B'_1$ and $B'_2$ have to intersect and fill the surface $\mathbb{D}_3$.  Otherwise there is a non-boundary-parallel simple closed curve which is invariant by $\tau_{B'_1} \tau_{B'_2}$. This is impossible for a pseudo-Anosov diffeomorphism.

Assume that $I(B'_1,B'_2) = z$, where $z$ is a non-negative integer. As above we obtain the affine representative for
$\tau_{B'_1} \tau_{B'_2}$. It is
$\begin{bmatrix}
1 & z \\
0 & 1
\end{bmatrix}
\begin{bmatrix}
1 & 0 \\
-z & 1
\end{bmatrix}=
\begin{bmatrix}
1-z^2 & z \\
-z & 1
\end{bmatrix} $.
Since $\tau_{B'_1} \tau_{B'_2}$ and $\tau_{B_1} \tau_{B_2}$ are conjugate, they have the same stretch factors (\cite[Page 406]{Farb_Margalit_Primer}). So $\frac{1}{2}(z^{2}-2+z\sqrt {z^{2}-4})=7+4\sqrt{3}$, and $z=4$.

Now we conjugate $\tau_{B'_1} \tau_{B'_2}$ by a diffeomorphism which takes  $B'_1$ to $B_1$, and $B'_2$ to a curve $B''_2$.  We know that $B''_2$ and $B_1$ intersect in exactly $4$ points.  Since $B''_2$ and $B_2$ represent the same homology classes in $H_1(\mathbb{D}_3)$, we know that the algebraic intersection number of $B''_2$ and $B_2$ with each nontrivial arc in the surface is the same. In particular, for the arc $\gamma$ shown in the left of Figure~\ref{fig:open_book_and_arc_a_shown}, the algebraic intersection number of $B''_2$ and $\gamma$ is $0$, and the  geometric intersection number of $B''_2$ and $\gamma$ is even.

 If $I(B''_2,\gamma) = 0$, then it is easy to see that $B''_2$ is isotopic to $B_1$. This is impossible by the above argument.

\begin{figure}[htb]
\begin{overpic}
{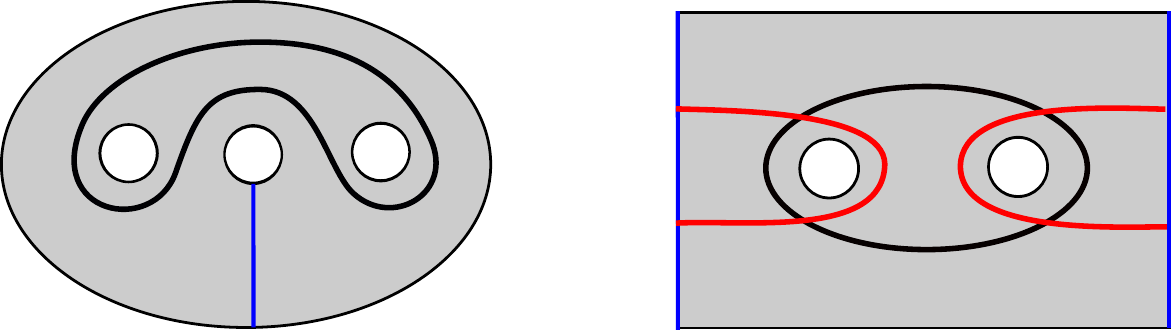}
\put(130, 10){$0$}
\put(34, 50){$1$}
\put(108, 50){$2$}
\put(70, 50){$3$}
\put(75, 20){$\gamma$}
\put(300, 83){$0$}
\put(300, 3){$3$}
\put(234, 45){$1$}
\put(290, 45){$2$}
\end{overpic}
\caption{Arc $\gamma$ along which the surface is cut open is shown in the left picture. The right picture shows the cut open surface.}
\label{fig:open_book_and_arc_a_shown}
\end{figure}

Suppose $I(B''_2,\gamma)\geq 2$. We cut the surface $\mathbb{D}_3$ open along the arc $\gamma$, and think of the resulted surface as a pair of pants with the outer boundary drawn as a rectangle.  See the right of Figure~\ref{fig:open_book_and_arc_a_shown}. Under this operation, $B''_2$ is cut into a collection of properly embedded arcs which are pairwise disjoint.  Each of these  arcs is one of the following three types.
\begin{itemize}
\item \textbf{Type~I:} Both  end points of the arc are on the left edge of  the rectangle.

\item \textbf{Type~II:} Both  end points of the arc are on the right edge of  the rectangle.

\item \textbf{Type~III:} The arcs have one end point on the left edge and the other on the right edge of  the rectangle.
\end{itemize}

Since $B_1$ is parallel to the outer boundary of the rectangle, each of Type I, II and III  arcs intersects the curve $B_1$ in 2 points or is disjoint with $B_1$. It is easy to see that each of Type I, II, and III arcs intersects the curve $B_1$ in exactly 2 points.

For $B''_2$ to be a simple closed curve enclosing holes $c_1$ and $c_2$, there is at least one arc of Type I and at least one arc of Type II. Since  $I(B''_2,B_1) = 4$, $B''_2$ is cut open into a Type I arc and a Type II arc.

If the Type I arc encloses the hole $c_1$ with the left edge of the rectangle, and the Type II arc encloses the hole $c_2$ with the right edge of the rectangle, then some power of  $\tau_{B_1}$ will send $B''_2$ to be the one which is formed by the two arcs shown in the right of Figure~\ref{fig:open_book_and_arc_a_shown}. Hence  $\tau_{B_1} \tau_{B''_2}$ is conjugate to $\tau_{B_1} \tau_{B_2}$.

\begin{figure}[htb]
\begin{overpic}
{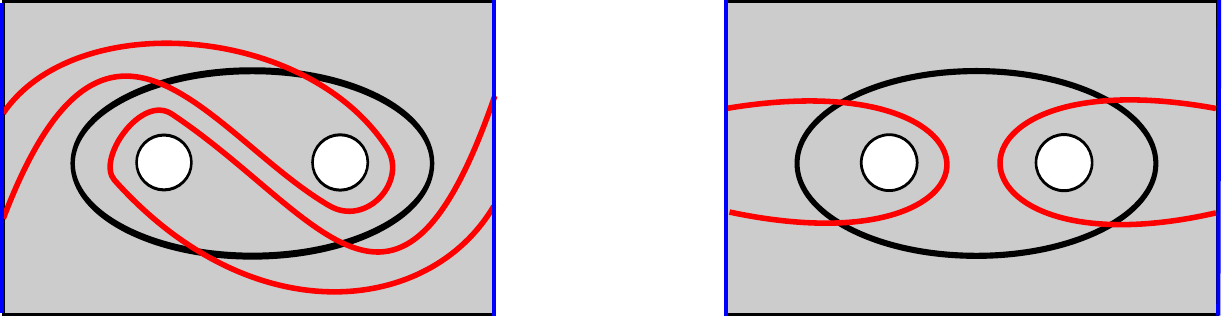}
\put(30, 82.5){$0$}
\put(45, 40){$1$}
\put(95, 40){$2$}
\put(30, 3){$3$}

\put(240, 82.5){$0$}
\put(240, 3){$3$}
\put(305, 40){$1$}
\put(255, 40){$2$}
\end{overpic}
\caption{Another choice of the arc for the curve $B''_2$ shown in the left. The right one is obtained from the left one by a diffeomorphism moving the hole $c_1$ to the right and the hole $c_2$ to the left.}
\label{fig:open_book_and_arc_a_shown0}
\end{figure}

If the Type I arc encloses the hole $c_2$ with the left edge of the rectangle, and the Type II arc encloses the hole $c_1$ with the right edge of the rectangle, then some power of  $\tau_{B_1}$ will send $B''_2$ to be the one which is formed by the two arcs shown in the left of Figure~\ref{fig:open_book_and_arc_a_shown0}. So there are at most two choices for the curve $B''_2$ up to conjugation. Hence, there are at most two different factorizations of $\tau_{B_1} \tau_{B_2}$ up to a global conjugation.
\end{proof}

\begin{lemma}
\label{general_uniqueness_of_factorization}
Let $\Sigma$ be the surface in Figure~\ref{fig:p14} with  $k=1$ or $2$, and  $n=2$.  Suppose $p=q=1$.
Then $\tau_{B_1}\tau_{B_2}\in Map(\mathbb{D}_4,\partial  \mathbb{D}_4)$ has at most two  positive factorizations up to a global conjugation.
\end{lemma}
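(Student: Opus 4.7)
My plan is to adapt the proof of Lemma~\ref{lemma_uniqueness_of_positive_factorization}. By Lemma~\ref{lemma_combinatorial}, every positive factorization of $\tau_{B_1}\tau_{B_2}$ in $Map(\mathbb{D}_4,\partial\mathbb{D}_4)$ has, up to global conjugation, the form $\tau_{B'_1}\tau_{B'_2}$, where $B'_1$ and $B'_2$ enclose the same sets of holes as $B_1$ and $B_2$. Swapping $B_1 \leftrightarrow B_2$ interchanges the two cases $k=1$ and $k=2$, so it suffices to treat $k=1$: then $B'_1$ encloses $\{c_1,c_2,c_3\}$, $B'_2$ encloses $\{c_1,c_3\}$, $c_2$ is enclosed only by $B'_1$, and $c_4$ is enclosed by neither.

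The idea is to reduce the count to Lemma~\ref{lemma_uniqueness_of_positive_factorization} by capping off the distinguished boundary $c_2$ with a disk. The capped surface is a $4$-holed sphere (which we again denote by $\mathbb{D}_3$) with boundary components $\{c_0,c_1,c_3,c_4\}$; in it both $\bar{B}_1$ and $\bar{B}_2$ enclose the single set $\{c_1,c_3\}$, while $c_4$ remains enclosed by neither. Up to a relabelling of holes this is exactly the setup of Lemma~\ref{lemma_uniqueness_of_positive_factorization}, so that lemma bounds the number of positive factorizations of the induced mapping class $\tau_{\bar{B}_1}\tau_{\bar{B}_2}$ in $Map(\mathbb{D}_3,\partial\mathbb{D}_3)$ by two, up to global conjugation.

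To transfer the bound back to $\mathbb{D}_4$, I would argue that the capping homomorphism descends to an injection on equivalence classes of positive factorizations once the side of $c_2$ is recorded. The constraint of Lemma~\ref{lemma_combinatorial} forces $c_2$ to lie inside $B'_1$ and outside $B'_2$ in any lift. Combined with the isotopy class of $(\bar{B}'_1,\bar{B}'_2)$ in the capped $\mathbb{D}_3$, this side information should reconstruct the pair $(B'_1,B'_2)$ in $\mathbb{D}_4$ uniquely up to ambient isotopy, yielding at most two positive factorizations in $Map(\mathbb{D}_4,\partial\mathbb{D}_4)$ up to global conjugation.

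The hardest step will be the lift. In principle, elements in the kernel of the capping homomorphism --- coming from the Birman-type exact sequence, and in particular from point-pushing the capping disk around loops in $\mathbb{D}_3$ --- could produce distinct lifts of a single equivalence class. Making the plan precise will require a careful analysis of how such point-pushing interacts with the prescribed enclosed-hole sets, and verifying that the enclosed-set constraint rules out these extra lifts.
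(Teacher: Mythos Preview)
Your approach differs from the paper's, and the lifting step you single out is a genuine gap that the proposal leaves open. The paper does not cap $c_2$; it reruns the argument of Lemma~\ref{lemma_uniqueness_of_positive_factorization} directly in $\mathbb{D}_4$. Since $B_1$ and $B_2$ fill $\mathbb{D}_4$, the Thurston representation into $PSL(2,\mathbb{R})$ again applies, and the stretch-factor comparison forces $I(B'_1,B'_2)=4$. After conjugating $B'_1$ to $B_1$, one cuts $\mathbb{D}_4$ along an arc $\gamma$ from $c_4$ to $c_0$ (disjoint from $B_1$), turns $B''_2$ into a Type~I and a Type~II arc in the resulting rectangle, and reads off at most two possibilities exactly as before. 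No capping or lifting is needed.

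The difficulty with your route is concrete, and the side information you propose is not enough. For a fixed isotopy class $[\bar B''_2]$ in the capped surface, the lifts to $\mathbb{D}_4$ having $c_2$ on the outside are parametrized by cosets of $i_*\pi_1(\text{outside of }\bar B''_2)$ in $\pi_1(\mathbb{D}_3)$; here that is the subgroup $\langle x_1x_3,\,x_4\rangle$ inside the free group $\langle x_1,x_3,x_4\rangle$, which has infinite index, so there are infinitely many lifts satisfying your enclosed-hole constraints. To collapse them you must show that the stabilizer of $B_1$ in $Map(\mathbb{D}_4,\partial\mathbb{D}_4)$ acts transitively on them, but that stabilizer contains only the point-pushes of $c_2$ along loops staying inside $\bar B_1$, together with powers of $\tau_{B_1}$ and twists supported inside $B_1$. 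Establishing transitivity becomes a double-coset computation in a free group, or equivalently a region-by-region analysis of the complement of $\bar B_1\cup\bar B''_2$ --- which is exactly the work the paper carries out directly via the arc $\gamma$. So the hoped-for reduction to Lemma~\ref{lemma_uniqueness_of_positive_factorization} does not actually shorten the argument, and as written the proof is incomplete.
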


\begin{proof}
If $k=1$ and $n=2$, then there is no hole which is enclosed by $B_2$ but not by $B_1$, and there is one hole which is enclosed by $B_1$ but not by $B_2$.  If $k=n=2$, then there is no hole which is enclosed by $B_1$ but not by $B_2$, and there is one hole which is enclosed by $B_2$ but not by $B_1$.  By symmetry, we can only prove for the first case.

By Lemma~\ref{lemma_combinatorial}, any positive factorization of $\tau_{B_1} \tau_{B_2}$ is $\tau_{B'_1} \tau_{B'_2}$ up to a global conjugation, where $B'_1$ and $B'_2$ enclose the same set of holes as $B_1$ and $B_2$, respectively.

The curves $B_1$ and $B_2$ fill the surface $\mathbb{D}_4$. Following exactly the same argument we get that $B'_1$ and $B'_2$ intersect in exactly $4$ points. Now we conjugate  $\tau_{B'_1} \tau_{B'_2}$ by a diffeomorphism which takes curve $B'_1$ to $B_1$. This will change $\tau_{B'_1} \tau_{B'_2}$ to $\tau_{B_1} \tau_{B''_2}$, where $B''_2$ is a curve which encloses the same set of holes as $B_2$.

Let $\gamma$ be an arc connecting holes $c_4$ and $c_0$, see the left of Figure~\ref{fig:open_book_and_arc_a_shown1}. Then, by the proof of Lemma~\ref{lemma_uniqueness_of_positive_factorization}, $B''_2$ intersects $\gamma$ in $2$ points minimally. We cut the surface $\mathbb{D}_4$ open along the arc $\gamma$, and think of the resulted surface with the outer boundary drawn as a rectangle.  See the right of Figure~\ref{fig:open_book_and_arc_a_shown1}. So $B''_2$ becomes two arcs one of which has two endpoints belong to the left edge and encloses one hole of $c_1$ and $c_3$, and the other of which  has two endpoints belong to the right edge and encloses the other hole of $c_1$ and $c_3$.

Exactly as in the proof of Lemma~\ref{lemma_uniqueness_of_positive_factorization}, we get that up to a diffeomorphism preserving orientation and commuting with $\tau_{B_1}$, there are at most two choices for the curve $B''_2$. Hence, there are at most two different positive factorizations of $\tau_{B_1} \tau_{B_2}$ up to  a global conjugation.
\end{proof}

\begin{figure}[htb]
\begin{overpic}
{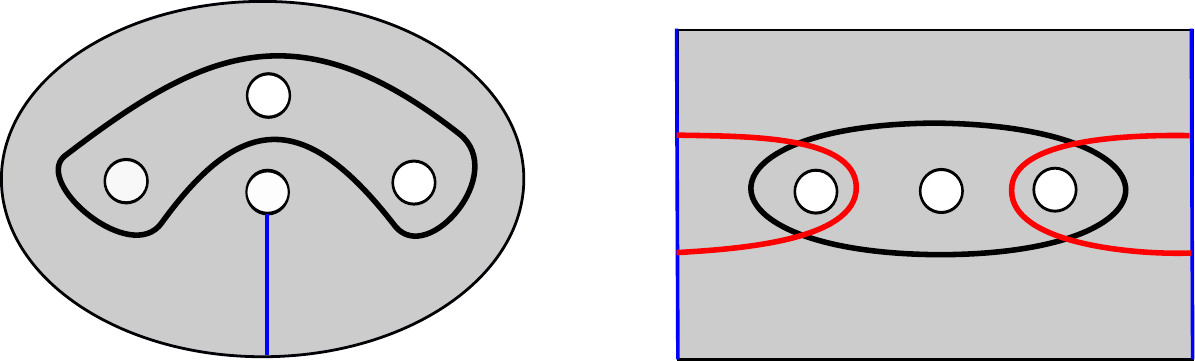}
\put(130, 10){$0$}
\put(34, 48){$1$}
\put(75, 73){$2$}
\put(117, 48){$3$}
\put(74, 45){$4$}
\put(80, 20){$\gamma$}
\put(300, 85){$0$}
\put(300, 5){$4$}
\put(234, 45){$1$}
\put(270, 45){$2$}
\put(303, 46){$3$}
\end{overpic}
\caption{Arc $\gamma$ along which the surface is cut open is shown in the left picture. The right picture shows the cut open surface.}
\label{fig:open_book_and_arc_a_shown1}
\end{figure}

\begin{lemma}
\label{more_general_uniqueness_of_factorization}
Let $\Sigma$ be the surface in Figure~\ref{fig:p14} with  $k=2$ and  $n=3$. Suppose $p=q=1$.
Then $\tau_{B_1}\tau_{B_2}\in Map(\mathbb{D}_5,\partial\mathbb{D}_5)$ has at most two positive factorizations up to a global conjugation.
\end{lemma}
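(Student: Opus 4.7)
The plan is to follow the template of Lemmas~\ref{lemma_uniqueness_of_positive_factorization} and \ref{general_uniqueness_of_factorization}, with additional bookkeeping for the two extra holes ($c_1$ for $B_2$ and $c_3$ for $B_1$). First I invoke Lemma~\ref{lemma_combinatorial}: any positive factorization of $\tau_{B_1}\tau_{B_2}\in Map(\mathbb{D}_5,\partial\mathbb{D}_5)$ has, up to a global conjugation, the form $\tau_{B'_1}\tau_{B'_2}$, where $B'_1$ encloses exactly $\{c_2,c_3,c_4\}$ and $B'_2$ encloses exactly $\{c_1,c_2,c_4\}$. The shared holes of $B'_1$ and $B'_2$ are $\{c_2,c_4\}$, and neither curve encloses the central hole $c_5$.

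Next I apply Thurston's theorem as in the proof of Lemma~\ref{lemma_uniqueness_of_positive_factorization} to the subgroup $\langle\tau_{B_1},\tau_{B_2}\rangle$. Since $B_1$ and $B_2$ drawn in Figure~\ref{fig:p14} fill $\mathbb{D}_5$, the affine representation $\rho:\langle\tau_{B_1},\tau_{B_2}\rangle\to PSL(2,\mathbb{R})$ is defined, and a direct computation of the trace of $\rho(\tau_{B_1}\tau_{B_2})$ confirms that this element is pseudo-Anosov. Since $\tau_{B'_1}\tau_{B'_2}$ is conjugate to $\tau_{B_1}\tau_{B_2}$, it has the same stretch factor; matching the stretch factor formula, which is monotone in the off-diagonal entry of the affine representative, forces $I(B'_1,B'_2)=I(B_1,B_2)$ and so in particular the pair $(B'_1,B'_2)$ fills the surface.

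After conjugating by a diffeomorphism carrying $B'_1$ to $B_1$, I obtain a factorization of the form $\tau_{B_1}\tau_{B''_2}$, where $B''_2$ encloses $\{c_1,c_2,c_4\}$ and meets $B_1$ in exactly $I(B_1,B_2)$ points. I then choose a properly embedded arc $\gamma$ in $\mathbb{D}_5$ joining $c_0$ to $c_5$, disjoint from $B_1$, and cut $\mathbb{D}_5$ open along $\gamma$. In the resulting planar surface, drawn as a rectangle with interior holes $c_1,c_2,c_3,c_4$, the homology constraint that $B''_2$ is homologous to $B_2$, combined with the minimality of geometric intersection, pins down $I(B''_2,\gamma)=2$. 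Hence $B''_2$ is cut into exactly two properly embedded arcs of Type~I, II, or III (endpoints on the left copy of $\gamma$, the right copy, or one on each), exactly as in the previous two lemmas.

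Finally, each arc meets $B_1$ in precisely two points, and the requirement that $B''_2$ enclose exactly $\{c_1,c_2,c_4\}$ pins down, up to diffeomorphisms commuting with $\tau_{B_1}$, at most two arrangements of the two arcs. This yields at most two positive factorizations of $\tau_{B_1}\tau_{B_2}$ up to global conjugation. The step I expect to be the main obstacle is ruling out extraneous configurations in which an arc of $B''_2$ winds around the hole $c_3$ (which now lies inside the rectangle but is not enclosed by $B''_2$); there one has to combine the homology constraint with the geometric intersection count $I(B_1,B''_2)=I(B_1,B_2)$, possibly via an auxiliary arc separating $c_3$ from $c_1$, to eliminate such winding. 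Once that exclusion is in place, the analysis is a direct parallel of Lemma~\ref{general_uniqueness_of_factorization}.
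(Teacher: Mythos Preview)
Your opening steps---invoking Lemma~\ref{lemma_combinatorial}, applying Thurston's representation to get $I(B'_1,B'_2)=4$, and conjugating $B'_1$ to $B_1$---are correct and match the paper. The divergence, and the gap, is in the cutting step.

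You propose to cut along a single arc $\gamma$ from $c_0$ to $c_5$, leaving a rectangle with interior holes $c_1,c_2,c_3,c_4$. Two things go wrong. First, your claim that homology plus minimality forces $I(B''_2,\gamma)=2$ is not justified: since $B_2$ (and hence $B''_2$) encloses neither $c_0$ nor $c_5$, the algebraic intersection with $\gamma$ is $0$, so the geometric number is only known to be even; you have no mechanism to exclude $0$ (in the cut rectangle there \emph{are} curves enclosing $\{c_1,c_2,c_4\}$, not $c_3$, meeting $B_1$ in exactly $4$ points). Second---and more seriously---after your cut the hole $c_1$ remains interior but lies \emph{outside} $B_1$, so $B_1$ is no longer parallel to the outer boundary of the rectangle. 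The entire engine of Lemmas~\ref{lemma_uniqueness_of_positive_factorization} and~\ref{general_uniqueness_of_factorization} was that $B_1$ becomes boundary-parallel after cutting, which forces every arc of $B''_2$ to meet $B_1$ in exactly $0$ or $2$ points; once $c_1$ sits outside $B_1$ in the cut surface, arcs of $B''_2$ can wander around $c_1$ without touching $B_1$, and the counting argument collapses. Your acknowledged obstacle about $c_3$ is real, but the unacknowledged obstacle about $c_1$ is the fatal one.

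The paper supplies the missing idea: rather than fixing the arc in advance, it analyzes the cell decomposition of $S^2$ determined by $B_1\cup B''_2$ (four bigons containing $c_0,c_2,c_4,c_5$ and two squares containing $c_1,c_3$), and from this reads off two disjoint arcs $\gamma'_1,\gamma'_2$---one from $c_1$ to $c_5$, the other from $c_1$ to $c_0$---each disjoint from $B_1$ and meeting $B''_2$ exactly once. A diffeomorphism preserving $B_1$ straightens these to standard arcs, and cutting along both absorbs $c_1$ and $c_5$ into the outer boundary. Now $B_1$ \emph{is} boundary-parallel, the interior holes are exactly $c_2,c_3,c_4$, and the analysis finishes as in the earlier lemmas.
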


\begin{proof}
Since $k=2$ and  $n=3$, there is one hole which is enclosed by $B_2$ but not by $B_1$, and one hole which is enclosed by $B_1$ but not by $B_2$.

\begin{figure}[htb]
\begin{overpic}
{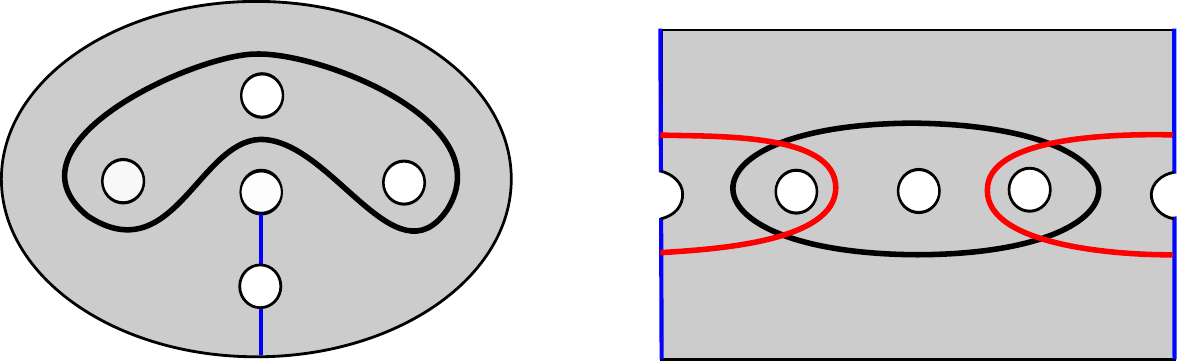}
\put(130, 10){$0$}
\put(73,18){$1$}
\put(33, 48){$2$}
\put(73, 73){$3$}
\put(115, 48){$4$}
\put(73, 45){$5$}
\put(77, 7){$\gamma_2$}
\put(77, 33){$\gamma_1$}
\put(300, 85){$0$}
\put(300, 5){$5$}
\put(190, 45){$1$}
\put(227, 45){$2$}
\put(263, 45){$3$}
\put(294, 45){$4$}
\put(335, 45){$1$}
\put(178, 10){$\gamma_{1}^{1}$}
\put(342, 10){$\gamma_{1}^{2}$}
\put(178, 80){$\gamma_{2}^{1}$}
\put(342, 80){$\gamma_{2}^{2}$}
\end{overpic}
\caption{Arcs $\gamma_1$ and $\gamma_2$ along which the surface is cut open is shown in the left picture. The right picture shows the cut open surface, where $\gamma_{i}^{1}$ and $\gamma_{i}^{2}$ are two copies of $\gamma_{i}$, for $i=1,2$.}
\label{fig:open_book_and_arc_a_shown2}
\end{figure}

By Lemma~\ref{lemma_combinatorial}, any positive factorization of $\tau_{B_1} \tau_{B_2}$ is $\tau_{B'_1} \tau_{B'_2}$ up to a global conjugation, where $B'_1$ and $B'_2$ enclose the same set of holes as $B_1$ and $B_2$, respectively.

The curves $B_1$ and $B_2$ fill the surface $\mathbb{D}_5$. Following exactly the same argument as in the proof of Lemma~\ref{lemma_uniqueness_of_positive_factorization}, we get that $B'_1$ and $B'_2$ intersect in exactly $4$ points. Now we conjugate  $\tau_{B'_1} \tau_{B'_2}$ by a diffeomorphism which takes curve $B'_1$ to $B_1$. This will change $\tau_{B'_1} \tau_{B'_2}$ to $\tau_{B_1} \tau_{B''_2}$, where $B''_2$ is a curve which encloses the same set of holes as $B_2$.

If we fill each hole of $\mathbb{D}_5$, including the outer boundary, by a disk with a marked point, then we get a $2$-sphere with $6$ marked points.  The two curves $B_1$ and $B''_2$ give a cell decomposition of the $2$-sphere. It has four vertices, eight edges and six 2-cells. Each 2-cell contains a boundary component of $\mathbb{D}_5$.  There are four 2-cells which are bigons containing $c_0$, $c_2$, $c_4$, and $c_5$, respectively. There are two 2-cells which are squares containing $c_1$ and $c_3$, respectively. Each square has exactly one common edge with each of the four bigons. So there is a properly embedded arc $\gamma'_1$  in $S$ which connects holes $c_1$ and $c_5$, has exactly one intersection point with the common edge of the square containing $c_1$ and the bigon containing $c_5$, and is disjoint with $B_1$. There is a properly embedded arc $\gamma'_2$ in $S$ which connects holes $c_1$ and $c_0$, has exactly one intersection point with the common edge of the square containing $c_1$ and the bigon containing $c_0$, and is disjoint with $B_1$. Note that $\gamma'_1$ and $\gamma'_2$ correspond to two coedges of the cell decomposition. It is easy to make sure that we can choose the arcs $\gamma'_1$ and $\gamma'_2$ to be disjoint.

There is a diffeomorphism of $\mathbb{D}_5$ which takes $\gamma'_1$ and $\gamma'_2$ to $\gamma_1$ and $\gamma_2$, respectively, where $\gamma_1$ and $\gamma_2$ are as shown in the left of Figure~\ref{fig:open_book_and_arc_a_shown2} and keeps $B_1$ invariant. Such a diffeomorphism exists because the arcs $\gamma'_1$ and $\gamma'_2$ are chosen to be disjoint from the curve $B_1$. We denote the image of $B''_2$ under this diffeomorphism by $B''_2$ still. Then $I(B''_2, \gamma_i)=1$ for $i=1,2$.

We cut the surface $\mathbb{D}_5$ open along  $\gamma_1\cup\gamma_2$, and think of the resulting surface with the outer boundary drawn as a rectangle. To avoid confusion with other terminology used, we will denote this cut open surface by $\mathcal{R}$.  See the right of Figure~\ref{fig:open_book_and_arc_a_shown2}. The curve $B''_2$ is cut open into two arcs. One of them has both endpoints in the left edge of $\mathcal{R}$, and the other of them has both endpoints in the right edge of $\mathcal{R}$. Since $B_1$ is parallel to the outer boundary of $\mathcal{R}$, each of the two arcs are either disjoint with $B_1$ or has exactly two intersection points with $B_1$. Since $I(B_1, B''_2)=4$, both of the two arcs have exactly two intersection points with $B_1$. One of them encloses one hole of  $c_2$ and $c_4$ with a subarc of the left edge of $\mathcal{R}$. The other of them encloses the other hole of $c_2$ and $c_4$ with a subarc of the right edge of $\mathcal{R}$.

Just as in the proofs of Lemma~\ref{lemma_uniqueness_of_positive_factorization} and Lemma~\ref{general_uniqueness_of_factorization}, we get that there are at most two choices for $B''_2$. Hence,  $\tau_{B_1} \tau_{B_2}$ has at most two positive factorizations up to a global conjugation.
\end{proof}

\subsection{\textbf{Stein fillings of certain planar open books}}

\noindent Now we go back to the proof of Theorem~\ref{theorem_factorization_of_mapping_class}.

\begin{figure}[htb]
\begin{overpic}
{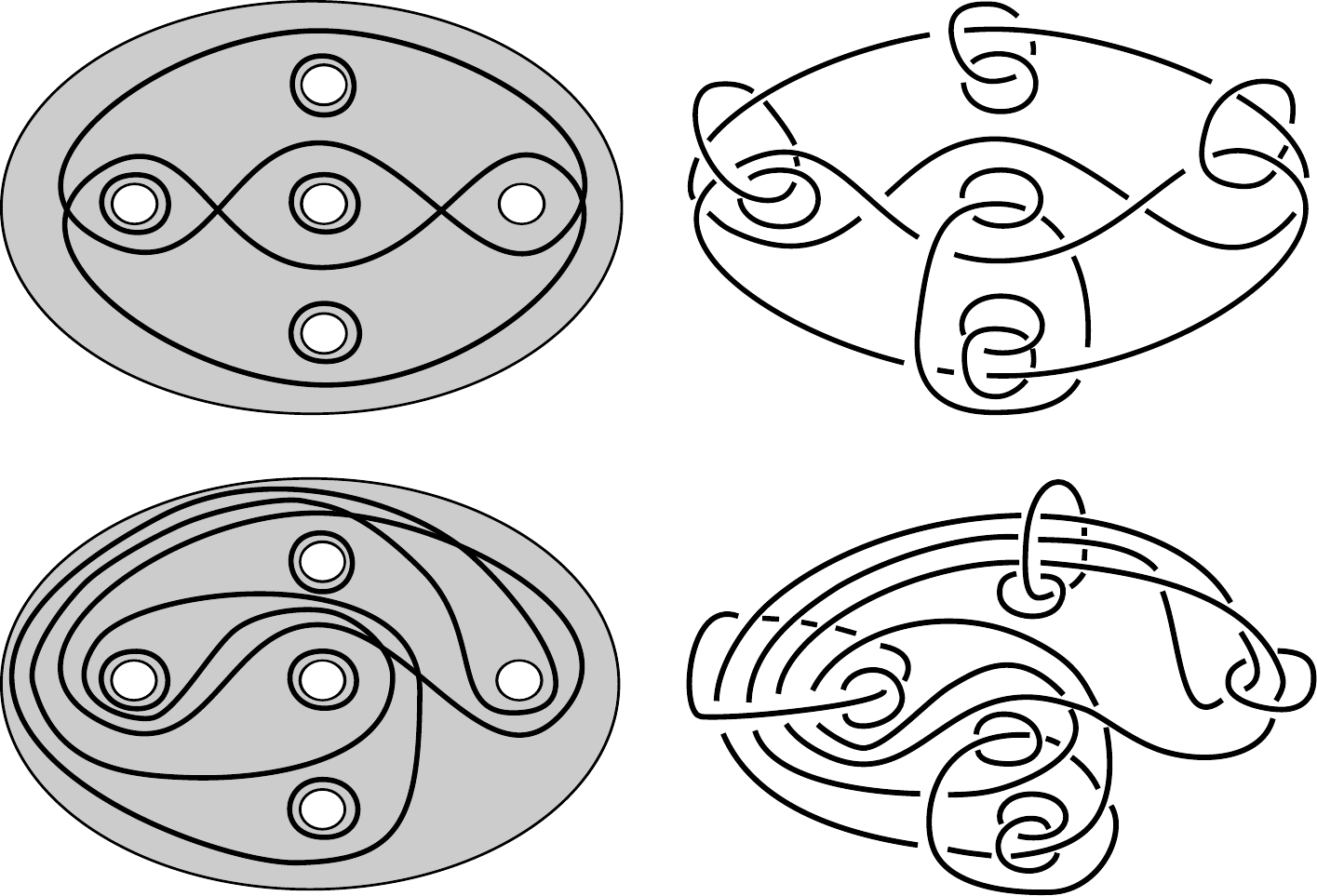}

\put(10, 20){$0$}
\put(98, 22){$1$}
\put(40, 62){$2$}
\put(97, 100){$3$}
\put(160, 62){$4$}
\put(97, 62){$5$}

\put(10, 168){$0$}
\put(98, 170){$1$}
\put(40, 210){$2$}
\put(97, 247){$3$}
\put(160, 210){$4$}
\put(97, 210){$5$}

\put(300, 274){$\bullet$}
\put(390, 250){$\bullet$}
\put(213, 241){$\bullet$}
\put(310, 153){$\bullet$}
\put(321, 149){$\bullet$}

\put(324, 125){$\bullet$}
\put(405, 67){$\bullet$}
\put(211.5, 60){$\bullet$}
\put(318, 5){$\bullet$}
\put(319, -2){$\bullet$}

\put(120, 169){$B''_2$}
\put(67, 252){$B_1$}
\put(127, 20){$B''_2$}
\put(67, 103){$B_1$}

\end{overpic}
\caption{The left two figures indicate the two choices of the curve $B''_2$. The upper left figure is $(\Sigma, \tau_{1}\tau_{2}\tau_{3}\tau_{5}\tau_{B_1}\tau_{B''_2})$. The lower left figure is $(\Sigma, \tau_{1}\tau_{2}\tau_{3}\tau_{5}\tau_{B_1}\tau_{B''_2})$ which is conjugate to $(\Sigma, \tau_{1}\tau_{2}\tau_{3}\tau_{5}\tau_{B''_2}\tau_{B_1})$. The right two figures are their corresponding Kirby diagrams for the Stein filling, where each dotted circle is a $4$-dimensional 1-handle, and all other circles have surgery coefficients $-1$. These two Kirby diagrams denote two diffeomorphic $4$-manifolds.}
\label{fig:Kirby diagrams}
\end{figure}

\begin{proof}

Suppose $a_{1},\dots , a_{k-1}$ are $k-1$ properly embedded pairwise disjoint arcs in $\Sigma$ which satisfy that: 1) $a_i$ connects the boundary components $c_i$ and $c_{i+1}$, 2) $a_i$ is disjoint with $B_1$ and $B_2$. Suppose $b_{k+q},\dots , b_{n+q-2}$ are $n-k-1$ properly embedded pairwise disjoint arcs in $\Sigma$ which satisfy that: 1) $b_i$ connects the boundary components $c_i$ and $c_{i+1}$, 2) $b_i$ is disjoint with $B_1$ and $B_2$.

If $p=q=1$, then by Lemma~\ref{lemma_combinatorial}, any other positive factorization of $\Phi$ must be the product of $\tau_{1}^{m_1}$, $\tau_{2}^{m_2}$, $\ldots$, $\tau_{n}^{m_{n}}$, $\tau_{n+2}^{m_{n+2}}$, $\tau_{B'_1}$ and $\tau_{B'_2}$, where the curves $B'_1$ and $B'_2$ enclose the same set of holes as curves $B_1$ and $B_2$, respectively. In particular, either $\tau_{B_1} \tau_{B_2}=\tau_{B'_1} \tau_{B'_2}$ or $\tau_{B_1} \tau_{B_2}=\tau_{B'_2} \tau_{B'_1}$. Without loss of generality, we assume that $\tau_{B_1} \tau_{B_2}=\tau_{B'_1} \tau_{B'_2}$.

Since the curves $B_1$ and $B_2$ do not intersect any of the arcs $a_i$, the diffeomorphism $\tau_{B_1} \tau_{B_2}$ does not move them. It follows that the diffeomorphism $\tau_{B'_1}\tau_{B'_2}$ does not move any of the arcs $a_i$ as well. We claim that the curves $B'_1$ and $B'_2$ do not intersect any of the arcs $a_i$. Suppose one of the curves $B'_1, B'_2$ intersects the arc $a_i$ for some $i$. Without loss of generality we can assume that curve to be $B'_2$. In this case, since any positive Dehn twist is right veering(see \cite{HKM_Right_Veering_1}), the diffeomorphism $\tau_{B'_2}$ will move the arc $a_i$ strictly to the right.  Hence, $\tau_{B'_1}$ should move the arc $a_i$ strictly to the left, which is impossible.  Similarly, the curves $B'_1$ and $B'_2$ do not intersect any of the arcs $b_i$.

Since the arcs $a_i$ and $b_i$ are not moved by the diffeomorphism $\tau_{B'_1} \tau_{B'_2}$, we can cut along arcs $a_i$ and $b_i$. If we do that we are left with the surface $\mathbb{D}_3$, $\mathbb{D}_4$ or $\mathbb{D}_5$, depending on $k$ and $n$. We still denote the curves by $B_1,B_2, B'_1, B'_2$ in this new surface. As before, there is a diffeomorphism which sends $B'_1$ to $B_1$, and $B'_2$ to $B''_2$. Now from Lemmas~\ref{lemma_uniqueness_of_positive_factorization}, ~\ref{general_uniqueness_of_factorization} and  ~\ref{more_general_uniqueness_of_factorization} it follows that there are at most two factorizations, up to a global conjugation, of the monodromy given by two different choices for the curve $B''_2$.

From the Kirby diagrams, see  Figure~\ref{fig:Kirby diagrams} for an example, we know that for both of these two choices of the curve $B''_2$, the manifold supported by the open book decomposition $$(\Sigma,\tau_{1}^{m_1}\tau_{2}^{m_2}\ldots \tau_{n}^{m_{n}}\tau_{n+2}^{m_{n+2}}\tau_{B_1}\tau_{B''_2})$$ is diffeomorphic to the original oriented 3-manifold, and their corresponding Stein fillings are diffeomorphic.  According to Theorem~\ref{Wendl_Planar_open_books}, Theorem~\ref{theorem_factorization_of_mapping_class} holds in this special case.

Now we are left to prove the general case.  Any other  factorization of $\Phi$ is $$\tau_{1}^{m_1}\tau_{2}^{m_2}\ldots \tau_{n+q-1}^{m_{n+q-1}}\tau_{n+q+1}^{m_{n+q+1}}\ldots\tau_{n+p+q}^{m_{n+p+q}}\tau_{B'_1}\tau_{B'_2}$$ up to a global conjugation, where $B'_1$ and $B'_2$ enclose the same boundary components as $B_1$ and $B_2$, respectively. In particular, either $\tau_{B_1} \tau_{B_2}=\tau_{B'_1} \tau_{B'_2}$ or $\tau_{B_1} \tau_{B_2}=\tau_{B'_2} \tau_{B'_1}$. Without loss of generality, we assume that $\tau_{B_1} \tau_{B_2}=\tau_{B'_1} \tau_{B'_2}$.

Suppose $u_{n+q+1},\dots , u_{n+q+p-1}$ are $p-1$ properly embedded pairwise disjoint arcs in $\Sigma$ which satisfy that: 1) $u_i$ connects the boundary components $c_i$ and $c_{i+1}$, 2) $u_i$ is disjoint with $B_1$ and $B_2$. Suppose $v_{k},\dots , v_{k+q-2}$ are $q-1$ properly embedded pairwise disjoint arcs in $\Sigma$ which satisfy that: 1) $v_i$ connects the boundary components $c_i$ and $c_{i+1}$, 2) $v_i$ is disjoint with $B_1$ and $B_2$.

Note that the diffeomorphism $\tau_{B_1} \tau_{B_2}$ does not move any of the arcs $u_{n+q+1},\dots,$  $u_{n+q+p-1},  v_{k}$, $\dots, v_{k+q-2}$ and so $\tau_{B'_1} \tau_{B'_2}$ does not move these arcs either. By the same argument as in previous paragraph, the curves $B'_1$ and $B'_2$ do not intersect arcs $u_{n+q+1},\dots,$  $u_{n+q+p-1},  v_{k}$, $\dots, v_{k+q-2}$.

Hence, to factorize $\Phi$ we need to specify curves $B'_1$ and $B'_2$ in the complement of arcs $u_{n+q+1},\dots,$  $u_{n+q+p-1},  v_{k}$, $\dots, v_{k+q-2}$.  So we cut the surface along arcs $u_{n+q+1}$, $\dots$, $u_{n+q+p-1}$, $v_{k}$, $\dots$, $v_{k+q-2}$, and what left is a planar surface with $n+3$ boundary components. Also, there is a diffeomorphism which sends $B'_1$ to $B_1$, and $B'_2$ to $B''_2$.

By Lemma~\ref{lemma_uniqueness_of_positive_factorization}, Lemma~\ref{general_uniqueness_of_factorization} and Lemma~\ref{more_general_uniqueness_of_factorization}, there are at most two factorizations, up to a global conjugation, of the monodromy given by two different choices for the curve $B''_2$. Considering the Kirby diagrams, we know that the two Stein fillings are diffeomorphic. This finishes the proof by Theorem~\ref{Wendl_Planar_open_books}. \end{proof}

\subsection{\textbf{Proofs of the corollaries}}

\begin{proof}[Proof of Corollary~\ref{twist_knot_family}]

Let $L'$ be a Legendrian twist knot $K_{-2p}$ with Thurston-Bennequin invariant $-n$ and rotation number $n-2k+1$. According to~\cite{Li_Wang_support_genera_of_legendrian_knots}, we can embed the Legendrian link in Figure~\ref{fig:Legendrian_surgery_diagram_for_infinite_family1} into a page of an embedded open book supporting $(S^3, \xi_{std})$. See Figure~\ref{fig:embedded_open_book}. In the figure, $L'$ denotes a Legendrian twist knot $K_{-2p}$ and $U_1,\dots,U_{k-1},U_{k+1},\dots,U_{n}$ denote Legendrian unknots with $tb=-1$ and $rot = 0$  embedded in the page of an open book decomposition supporting $(S^3,\xi_{std})$.

\begin{center}
\begin{figure}[htb]
\begin{overpic}
{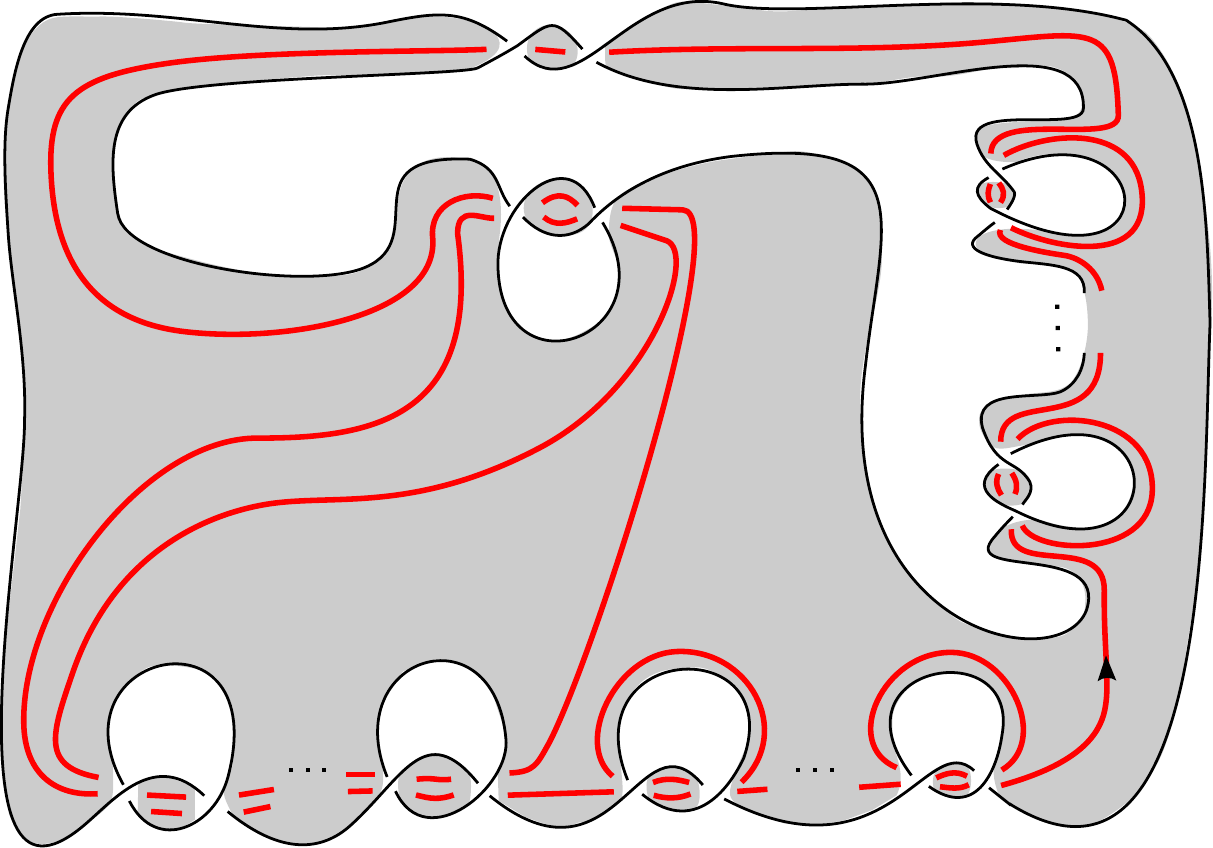}
\put(330, 10){$0$}
\put(318, 105){$n$}
\put(298, 185){$k+1$}
\put(275, 40){$1$}
\put(188, 40){$k-1$}
\put(115, 59){$n+2$}
\put(35, 59){$n+p+1$}
\put(160, 150){$k$}
\put(80, 210){$n+1$}
\put(192,110){$L'$}
\put(247,54){$U_1$}
\put(200,60){$U_{k-1}$}
\put(325,165){$U_{k+1}$}
\put(325,80){$U_n$}
\end{overpic}
\caption{An embedded open book decomposition supporting $(S^3, \xi_{std})$ with a twist knot $K_{-2p}$ shown in the figure by $L'$. Curves $U_1,\dots,U_{k-1},U_{k+1},\dots,U_{n}$ denote Legendrian unknots embedded in the page of open book decomposition. The boundary components of the page are labelled by $0$, $1$, $\cdots$, $n+p+1$. The shaded region in the figure shows the page of the open book decomposition. This open book is obtained from the open book in Figure 3 in~\cite{Li_Wang_support_genera_of_legendrian_knots} by plumbing some positive Hopf bands.   }
\label{fig:embedded_open_book}
\end{figure}
\end{center}

\begin{lemma}
\label{embed_to_abstract}
The embedded open book supporting $(S^3, \xi_{std})$ depicted in Figure~\ref{fig:embedded_open_book} can be transformed into an abstract version $(\Sigma, \phi)$ shown in Figure~\ref{fig:p14}, with $q=1$ and $$\phi=\tau_{1}\tau_{2}\ldots \tau_{k-1}\tau_{k}\tau_{k+1}\ldots\tau_{n}\tau_{n+2}\ldots\tau_{n+p+1}\tau_{B_1}.$$
\end{lemma}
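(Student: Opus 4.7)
The plan is to recognize the embedded open book of Figure~\ref{fig:embedded_open_book} as obtained from the embedded open book of Figure~3 in~\cite{Li_Wang_support_genera_of_legendrian_knots} by a sequence of positive Hopf band plumbings, and to keep track of how the page and the monodromy transform under these plumbings.

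First, I would invoke~\cite{Li_Wang_support_genera_of_legendrian_knots} to obtain a base embedded open book $(\Sigma_0,\phi_0)$ supporting $(S^3,\xi_{std})$ whose page is a planar surface with $p+2$ boundary components and on which the Legendrian twist knot $K_{-2p}$ sits as an essential simple closed curve $\beta$. The monodromy $\phi_0$ can be read off as a product of positive Dehn twists, all but one of which are boundary-parallel; the non-boundary-parallel factor is precisely $\tau_\beta$. Under the intended identification with Figure~\ref{fig:p14}, the $p+1$ boundary-parallel factors become $\tau_k$ and $\tau_{n+2},\ldots,\tau_{n+p+1}$.

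Next, I would pass from $(\Sigma_0,\phi_0)$ to the open book of Figure~\ref{fig:embedded_open_book} by plumbing positive Hopf bands to accommodate each of the Legendrian unknots $U_1,\ldots,U_{k-1},U_{k+1},\ldots,U_n$ into the page. A positive Hopf band plumbing is equivalent to a positive open book stabilization and hence preserves the supported contact structure $\xi_{std}$. Each plumbing enlarges the page by a $1$-handle (creating one new boundary component) and composes the monodromy on the right with a positive Dehn twist about a simple closed curve meeting the co-core of the $1$-handle once. Performing each plumbing in a small neighborhood of the appropriate upward or downward cusp of $L'$, we can arrange via Lemma~\ref{lemma:knot_stabilization} that the corresponding unknot $U_i$ becomes a curve boundary-parallel to the newly introduced hole $c_i$, so that the added Dehn twist is precisely the boundary-parallel twist $\tau_i$.

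After all plumbings are performed, the resulting page has $n+p+2$ boundary components, matching the surface $\Sigma$ of Figure~\ref{fig:p14} with $q=1$. I would then fix a diffeomorphism identifying the two surfaces, matching the plumbed holes with $c_1,\ldots,c_{k-1},c_{k+1},\ldots,c_n$, sending the boundaries of $\Sigma_0$ to $c_k$ and $c_{n+2},\ldots,c_{n+p+1}$, and carrying the curve $\beta$ onto $B_1$ (which is possible because $B_1$ encloses exactly the holes $c_k,\ldots,c_n,c_{n+1}$, the same configuration that $\beta$ surrounds after the plumbings). Using the commutativity of boundary-parallel Dehn twists (with each other and with $\tau_{B_1}$), the total monodromy can be rearranged into the product
\[
\phi=\tau_{1}\tau_{2}\cdots\tau_{k-1}\tau_{k}\tau_{k+1}\cdots\tau_{n}\tau_{n+2}\cdots\tau_{n+p+1}\tau_{B_1},
\]
as claimed. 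The main obstacle is the careful combinatorial verification that the chosen sequence of plumbings actually produces the arrangement of holes in Figure~\ref{fig:p14} and that the image of $L'$ on the enlarged page coincides with $B_1$ up to isotopy; this is a direct but delicate picture-by-picture check following the cusps of $K_{-2p}$.
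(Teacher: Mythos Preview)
There is a genuine gap in your identification of the curves and a resulting miscount of boundary components.

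In the embedded open book of Figure~\ref{fig:embedded_open_book} the Legendrian twist knot $L'$ is a curve sitting \emph{on} the page; it is \emph{not} a factor of the monodromy $\phi$.  The Dehn twist $\tau_{L'}$ is only appended later, when one performs Legendrian surgery and passes from $\phi$ to $\Phi$.  Under the diffeomorphism to Figure~\ref{fig:p14} the twist knot $L'$ is carried to $B_2$, not to $B_1$ (see the last sentence of the paper's proof).  The single non--boundary-parallel factor $\tau_{B_1}$ in $\phi$ comes from a different curve, denoted $\alpha$ in Figure~\ref{fig:abstract_open_book_twist_knot}: it is the stabilizing curve produced by the Hopf band plumbing that creates the boundary component $c_{n+1}$.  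Your proposal never performs this plumbing, so the hole $c_{n+1}$ is never created.  Indeed, your base page has $p+2$ boundary components and your $n-1$ plumbings for $U_1,\dots,U_{k-1},U_{k+1},\dots,U_n$ bring the total to $n+p+1$, whereas $\Sigma$ with $q=1$ has $n+p+2$.  Your own sentence ``$B_1$ encloses exactly the holes $c_k,\dots,c_n,c_{n+1}$, the same configuration that $\beta$ surrounds after the plumbings'' is therefore internally inconsistent, since $c_{n+1}$ does not exist in your construction.

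The paper's proof avoids this by starting from the disk and plumbing Hopf bands in the order $c_1,\dots,c_n,c_{n+2},\dots,c_{n+p+1}$, and finally $c_{n+1}$.  All but the last of these plumbings contribute boundary-parallel twists $\tau_i$; the last one contributes the non--boundary-parallel stabilizing curve $\alpha$, which becomes $B_1$.  To repair your argument you must (i) include this additional plumbing producing $c_{n+1}$ and the curve $\alpha$, and (ii) identify $\alpha$ (not the twist-knot curve $\beta$) with $B_1$, leaving $\beta=L'$ to play the role of $B_2$ on the page.
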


\begin{proof} The embedded open book depicted in Figure~\ref{fig:embedded_open_book} can be obtained from the open book supporting $(S^3,\xi_{std})$, whose binding is an unknot, by plumbing finitely many positive Hopf bands successively.   It is easy to see that an embedded open book with an unknot as binding can be transformed to an abstract version whose page is a disk and monodromy is the identity.   Recall that plumbing a positive Hopf band either increases the genus of the page by $1$ or increases the number of boundary components by $1$, depending on the way in which the $1$-handle is attached. See~\cite{Etnyre_Lectures_on_open_book_decompositions} for details. In this proof, each plumbing is done so that we increase only the number of boundary components. With this set up, one can plumb the positive Hopf bands in the order that creates the boundary components $c_{1}$, $\ldots$, $c_{k-1}$, $c_{k}$, $c_{k+1}$, $\ldots$, $c_{n}$, $c_{n+2}$, $\ldots$, $c_{n+p+1}$, and $c_{n+1}$ one after another.

Suppose an embedded open book $(\partial S_1, \pi_1)$ with page $S_1$ supporting $(S^3, \xi_{std})$ is transformed into an abstract version $(\Sigma_1, \phi)$ through a diffeomorphism from $S_1$ to $\Sigma_1$, and an embedded open book $(\partial S_2, \pi_2)$ with page $S_2$ is obtained from $(\partial S_1, \pi_1)$ by plumbing a positive Hopf band along a properly embedded arc $a\subset S_1$.  By the definition of plumbing, the positive Hopf band is attached at the two end points of the arc $a$, so that the union of arc $a$ and the core of the positive Hopf band is a simple closed curve which can be pushed away from the page $S_1$ and does not link with $\partial S_1$. Note that the isotopy class of the resulted embedded open book $(\partial S_2, \pi_2)$ does not depend on the choice of the interior of $a$. We denote by $a$, still, the properly embedded arc in the page of abstract open book $(\Sigma_1,\phi)$ which is the image of the arc $a$ in $S_1$. Then we can transform the embedded open book $(\partial S_2, \pi_2)$ into an abstract version $(\Sigma_2, \phi\cdot\tau_{c})$, where $\Sigma_2$ is obtained from $\Sigma_1$ by adding a $1$-handle at the two end points of the arc $a$, and the simple closed curve $c\subset \Sigma_2$, called a \emph{stabilizing curve}, is the union of the arc $a\subset \Sigma_1$ and the core of the attached $1$-handle. The abstract open book $(\Sigma_2, \phi\cdot\tau_{c})$ is a positive stabilization of $(\Sigma_1, \phi)$.

It is easy to draw the arcs along which the positive Hopf bands are plumbed to obtain the embedded open book in Figure~\ref{fig:embedded_open_book}.  Following the local transformation process in the order of plumbing mentioned above, we obtain an abstract version of the open book decomposition in Figure~\ref{fig:embedded_open_book} as shown in Figure~\ref{fig:abstract_open_book_twist_knot}, with $L'$ being a Legendrian twist knot $K_{-2p}$ in $(S^3,\xi_{std})$. By definition, each positive Hopf band corresponds to a stabilizing curve. For $i=1,\cdots, n, n+2, \cdots, n+p+1$, there is a stabilizing curve for open book decomposition supporting $(S^3,\xi_{std})$ which is parallel to the $i$-th boundary component. Moreover, the curve $\alpha$ is also a stabilizing curve for open book decomposition supporting $(S^3,\xi_{std})$ which corresponds to the top positive Hopf band in Figure~\ref{fig:embedded_open_book} that creates the boundary component $c_{n+1}$. 

It is easy to see that there is a diffeomorphism taking the surface in Figure~\ref{fig:abstract_open_book_twist_knot} to the one in Figure~\ref{fig:p14} when $q=1$. Under this diffeomorphism the curves $L', \alpha$ in Figure~\ref{fig:abstract_open_book_twist_knot} become the curves $B_2, B_1$ in Figure~\ref{fig:p14}.
\end{proof}

\begin{center}
\begin{figure}[htb]
\begin{overpic}
{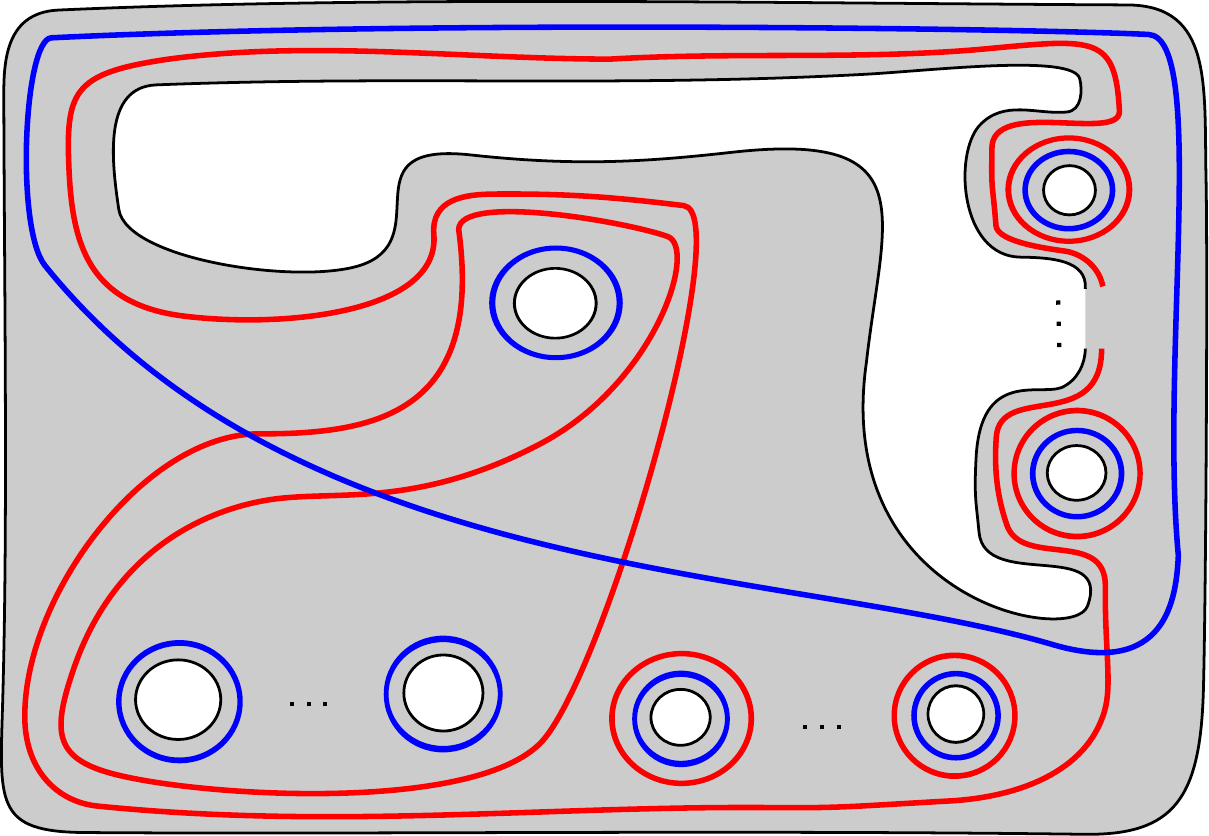}
\put(345, 0){$0$}
\put(153, 150){$k$}
\put(115, 60){$n+2$}
\put(35, 60){$n+p+1$}
\put(70, 207){$n+1$}
\put(10,80){$L'$}
\put(295,40){$U_1$}
\put(220,40){$U_{k-1}$}
\put(318,165){$U_{k+1}$}
\put(325,83){$U_n$}
\put(20,140){$\alpha$}
\end{overpic}
\caption{An abstract open book decomposition supporting $(S^3, \xi_{std})$ with a twist knot $K_{-2p}$ shown in the figure by $L'$. Curves $U_1,\dots,U_{k-1},U_{k+1},\dots,U_{n}$ denote Legendrian unknots embedded in the page of open book decomposition. The boundary components of the page are labelled by $0$, $1$, $\cdots$, $n+p+1$. In particular, the boundary component encircled by $U_i$ is labelled by $i$ for $i=1, \cdots, k-1, k+1, \cdots, n$.}
\label{fig:abstract_open_book_twist_knot}
\end{figure}
\end{center}

So the contact structure $(M',\xi')$ is supported by the open book $(\Sigma, \Phi)$ with $q=1$ and $$\Phi=\tau_{1}^{m_1}\tau_{2}^{m_2}\ldots \tau_{k-1}^{m_{k-1}}\tau_{k}\tau_{k+1}^{m_{k+1}}\ldots\tau_{n}^{m_{n}}\tau_{n+2}\ldots\tau_{n+p+1}\tau_{B_1}\tau_{B_2},$$
where $m_{i}\geq1$ for $i=1,\dots, k-1, k+1,\ldots, n$.

By Theorem~\ref{theorem_factorization_of_mapping_class},  $(M', \xi')$ admits a unique Stein filling up to diffeomorphism.
\end{proof}

\begin{proof}[Proof of Corollary~\ref{2_bridge_family}]

The open book $$(\Sigma, \tau_{1}\tau_{2}\ldots \tau_{n+q-1}\tau_{n+q+1}\ldots\tau_{n+p+q}\tau_{B_1})$$ corresponds to $(S^3, \xi_{std})$. If we transform it to the embedded version which is similar to Figure~\ref{fig:embedded_open_book}, then $B_2$ can be realized as a Legendrian $2$-bridge knot  $B(p,q)$ topologically shown in Figure~\ref{fig:2-bridge_knots}. By Lemma~\ref{lemma:knot_stabilization}, it is the result of $n-k$ positive stabilizations and $k-1$ negative stabilizations of a Legendrian 2-bridge knot $B(p,q)$ with Thurston-Bennequin invariant $-1$ and rotation number $0$. So by Theorem~\ref{theorem_factorization_of_mapping_class}, the contact 3-manifold which is obtained by Legendrian surgery on $(S^3, \xi_{std})$ along the Legendrian $B_2$ admits a unique Stein filling up to diffeomorphism.
\end{proof}

\subsection{\textbf{Uniqueness of certain Stein fillings up to symplectic deformation equivalence}}

\begin{proof}[Proof of Theorem~\ref{trefoil_family}]

Since $L$ is a Legendrian twist knot $K_{-2p}$ with Thurston-Bennequin invariant $-1$ and rotation number $0$, we can embed $S^{n-k}_{+}S^{k-1}_{-}(L)$ into a page of an embedded open book decomposition supporting $(S^3, \xi_{std})$ as in Figure~\ref{fig:embedded_open_book},  where the page is a compact planar surface with $n+p+2$ boundary components. By Lemma~\ref{embed_to_abstract}, we can transform this embedded open book decomposition into an abstract version $(\Sigma, \phi)$ with  $q=1$ and $$\phi=\tau_{1}\tau_{2}\ldots \tau_{n}\tau_{n+2}\ldots \tau_{n+p+1}\tau_{B_1}.$$

The Legendrian surgery on $(S^3, \xi_{std})$ along the stabilization $S^{n-k}_{+}S^{k-1}_{-}(L)$ yields a contact structure $\xi_k$ on the 3-manifold $S^{3}_{-1-n}(K_{-2p})$, which is supported by the open book decomposition $(\Sigma, \Phi)$ with $q=1$ and $$\Phi=\tau_{1}\tau_{2}\ldots\tau_{n}\tau_{n+2}\ldots \tau_{n+p+1}\tau_{B_1}\tau_{B_2}.$$

By Lemma~\ref{lemma_combinatorial}, any positive factorization of $\Phi$ has to be the product of $\tau_{1}$, $\tau_{2}$, $\ldots$, $\tau_{n}$, $\tau_{n+2}$, $\ldots$,  $\tau_{n+p+1}$, $\tau_{B'_1}$, $\tau_{B'_2}$,  where $B'_1$ and $B'_2$ enclose the same set of holes as $B_1$ and $B_2$, respectively.

The open book decomposition $$(\Sigma, \tau_{1}\tau_{2}\ldots\tau_{n}\tau_{n+2}\ldots \tau_{n+p+1}\tau_{B'_1})$$ also supports  $(S^3, \xi_{std})$. We think of  $B'_2$ as a knot in $(S^3, \xi_{std})$.

We claim that  $B'_2$ is isotopic to the twist knot $K_{-2p}$. There is an element $f\in Map(\Sigma, \partial\Sigma)$ which sends $B'_1$ to $B_1$. We denote $f(B'_2)$ by $B''_{2}$.  According to the proof of Theorem~\ref{theorem_factorization_of_mapping_class}, the given monodromy $\Phi$ has at most two different  positive factorizations, up to a global conjugation, depending on the two choices for $B''_2$.  Both of the two choices of  $B''_2$ are isotopic to the twist knot $K_{-2p}$. So $B'_2$ is isotopic to the twist knot $K_{-2p}$.

We compute the page framing of $B'_{2}$ in the open book decomposition $$(\Sigma, \tau_{1}\tau_{2}\ldots\tau_{n}\tau_{n+2}\ldots \tau_{n+p+1}\tau_{B'_1}).$$ To this end, we compute the linking number of $B''_{2}$ and its push-off in the page of open book decomposition $$(\Sigma, \tau_{1}\tau_{2}\ldots\tau_{n}\tau_{n+2}\ldots \tau_{n+p+1}\tau_{B_1})$$ shown in Figure~\ref{fig:embedded_open_book}.  For both of the two choices of $B''_2$, it is routine to check that the linking numbers of $B''_{2}$ and its push-off in the page are $-n$. So $B'_{2}$ has page framing $-n$ with respect to the Seifert framing.

Since $B'_2$ is not null-homologous in $\Sigma$, we can Legendrian realize it.  According to the definition of open book decomposition,  we know that the Thurston-Bennequin invariant of $B'_2$ is the difference between the page framing and the Seifert framing, that is, $-n$.

Therefore, the Lefschetz fibration $X$ over $D^2$, with fiber $\Sigma$, corresponding to the positive factorization $\tau_{1}\tau_{2}\ldots\tau_{n}\tau_{n+2}\ldots \tau_{n+p+1}\tau_{B'_1}\tau_{B'_2}$ of  $\Phi$  is diffeomorphic to $D^4$, with its standard complex structure, and a 2-handle attached along a $(-1-n)$-framed twist knot $K_{-2p}$. Also, $X$ has a Stein structure that arises from the Legendrian surgery along the Legendrian realized $B'_2$. By a theorem of Eliashberg \cite{Eliashberg_Stein_handle_attachment}, we can extend the Stein structure uniquely to this new manifold. Since there is a unique Legendrian twist knot $ K_{-2p}$ with Thurston-Bennequin invariant $-n$ and rotation number $n-2k+1$, \cite{Etnyre_Ng_Vertesi_Twist_knots_classification}, we know that the only Legendrian twist knot $K_{-2p}$ that can produce $(S^{3}_{-1-n}(K_{-2p}), \xi_k)$ is $S^{n-k}_{+}S^{k-1}_{-}(L)$. This implies that all Stein structures on $X$ are symplectic deformation equivalent. So $X$ has a unique Stein structure up to symplectic deformation.

According to Theorem~\ref{Wendl_Planar_open_books}, every Stein filling of $(S^{3}_{-1-n}(K_{-2p}), \xi_k)$ is symplectic deformation equivalent to a Lefschetz fibration compatible with the given planar open book decomposition  $(\Sigma, \Phi)$.  So there is a unique Stein filling on $(S^{3}_{-1-n}(K_{-2p}), \xi_k)$, up to symplectic deformation.
\end{proof}


\begin{thebibliography}{10}


\bibitem{Akbulut_Ozbagci_Stein_Surface_Lefschetz_fibrations}
Selman Akbulut and Burak Ozbagci, \emph{Lefschetz fibrations on compact {S}tein surfaces}, Geom. Topol. \textbf{5}(2001), 319--334 (electronic), \MR{1825664 (2003a:57055)}.


\bibitem{Eliashberg_Filling_by_holomorphic_disks_and_applications}
Yakov Eliashberg, \emph{Filling by holomorphic discs and its applications}, from: ``Geometry of low-dimensional manifolds, 2 (Durham, 1989)", (SK Donaldson, CB Thomas, editors), London Math. Soc. Lecture Note Ser. 151, Cambridge Univ. Press(1990), 45-67. \MR{1171908 (93g:53060)}

\bibitem {Eliashberg_Stein_handle_attachment} Yakov Eliashberg,
     \emph{Topological characterization of {S}tein manifolds of dimension {$>2$}}, Internat. J. Math.  \textbf{1} (1990), no. 1, 29--46,\MR{1044658 (91k:32012)}.


\bibitem {Etnyre_Lectures_on_open_book_decompositions}John B. Etnyre
   \emph{Lectures on open book decompositions and contact structures}, Floer homology, gauge
theory, and low-dimensional topology, Clay Math. Proc., vol. 5, Amer. Math. Soc., Providence, RI, 2006,
pp. 103--141. \MR{2249250 (2007g:57042)},

\bibitem{Etnyre_Ng_Vertesi_Twist_knots_classification} John B. Etnyre, Lenhard  Ng, and Vera V\'{e}rtesi,
 \emph{Legendrian and Trasnverse twist knots},
 J. Eur. Math. Soc. (JEMS) \textbf{15} (2013), no. 3, 969--995.

\bibitem{Farb_Margalit_Primer} Benson Farb and Dan Margalit,
A primer on mapping class groups, Princeton Mathematical Series, vol. 49,
Princeton University Press, Princeton, NJ, 2012. \MR {2850125 (2012h:57032)}




\bibitem{FLP_Thurston's_work_on_surfaces}
A. Fathi, F. Laudenbach and V. Po{\'e}naru,
\emph{Travaux de {T}hurston sur les surfaces},
Ast\'erisque,
66,
S{\'e}minaire Orsay,
              With an English summary,
Soci\'et\'e Math\'ematique de France,
Paris,
1979,
284,
\MR {568308 (82m:57003)},



\bibitem{Giroux_Correspondence}
Emmanuel Giroux,
\emph{G\'eom\'etrie de contact: de la dimension trois vers les
              dimensions sup\'erieures},
Proceedings of the {I}nternational {C}ongress of
              {M}athematicians, {V}ol. {II} ({B}eijing, 2002),
405--414,
Higher Ed. Press,
Beijing, 2002,
\MR{1957051 (2004c:53144)}

\bibitem{Gompf_Stipsicz} Robert E. Gompf and Andr{\'a}s I. Stipsicz, \emph{{$4$}-manifolds and {K}irby calculus}, Graduate Studies in Mathematics, vol. 20, American Mathematical Society, Providence, RI, 1999. \MR{ 1707327 (2000h:57038)}

\bibitem{Grauert_Levi's_problem_and_imbedding_real_analytic_manifolds_1958}Hans Grauert, \emph{On Levi's problem and the imbedding of real-analytic manifolds}, Ann. of Math. (2) \textbf{68}
(1958), 460--472. \MR {0098847 (20 \#5299)}

\bibitem{HKM_Right_Veering_1}Ko Honda, William H. Kazez, and Gordana Mati{\'c}, \emph{Right-veering diffeomorphisms of compact surfaces with boundary}, Invent. Math. \textbf{169} (2007), no. 2, 427--449. \MR {2318562 (2008e:57028)}

\bibitem{Kaloti_Stein_fillings_of_planar_open_books}Amey Kaloti, \emph{Stein fillings of planar open books}, arXiv:1311.0208.

\bibitem{Li_Wang_support_genera_of_legendrian_knots}Youlin Li and Jiajun Wang, \emph{The support genera of certain Legendrian knots}, J. Knot Theory Ramifications \textbf{21} (2012), no. 11, 1250105, 8. \MR {2969635}

\bibitem{Lisca_Classification_of_Stein_fillings_on_lens_spaces}Paolo Lisca, On symplectic fillings of lens spaces, Trans. Amer. Math. Soc. \textbf{360} (2008), no. 2, 765--799
(electronic). \MR {2346471 (2008h:57039)}

\bibitem{Loi_Piergallini_Lefschetz_fibrations}Andrea Loi and Riccardo Piergallini, \emph{Compact Stein surfaces with boundary as branched covers of $B^4$},
Invent. Math. \textbf{143} (2001), no. 2, 325--348. \MR {1835390 (2002c:53139)}

\bibitem{McCommand_Margalit_Braid_Group}Dan Margalit and Jon McCammond, \emph{Geometric presentations for the pure braid group}, J. Knot Theory
Ramifications \textbf{18} (2009), no. 1, 1--20. \MR {2490001 (2010a:20086)}

\bibitem{McDuff_Rational_ruled_surfaces}Dusa McDuff, \emph{The structure of rational and ruled symplectic 4-manifolds}, J. Amer. Math. Soc. \textbf{3} (1990),
no. 3, 679--712. \MR {1049697 (91k:58042)}

\bibitem{Ohta_Ono_Classifications_of_Stein_fillings_1}Hiroshi Ohta and Kaoru Ono, \emph{Simple singularities and topology of symplectically filling 4-manifold}, Comment. Math. Helv. \textbf{74} (1999), no. 4, 575--590. \MR {1730658 (2001b:57060)}

\bibitem{Ohta_Ono_Classifications_of_Stein_fillings_2}Hiroshi Ohta and Kaoru Ono, \emph{Simple singularities and symplectic fillings}, J. Differential Geom. \textbf{69} (2005), no. 1, 1--42.
\MR {2169581 (2006e:53155)}


\bibitem{Plamenevskaya_VHMorris_Planar_open_books} Olga Plamenevskaya and Jeremy Van Horn-Morris, \emph{Planar open books, monodromy factorizations and
symplectic fillings}, Geom. Topol. \textbf{14} (2010), no. 4, 2077--2101. \MR {2740642 (2012c:57047)}

\bibitem{Schoenenberger_thesis_upenn} Stephan Sch{\"o}nenberger, \emph{Determining symplectic fillings from planar open books}, J. Symplectic Geom. \textbf{5}
(2007), no. 1, 19--41. \MR {2371183 (2008k:53196)}

\bibitem{Starkston_Symplectic_fillings_of_Seifert_fibered_spaces}Laura Starkston, \emph{Symplectic fillings of Seifert fibered spaces}, arXiv:1304.2420v2.

\bibitem{Stipsicz_gauge_theory_and_stein_fillings_of_certain_3_manifolds}Andr{\'a}s I. Stipsicz, \emph{Gauge theory and Stein fillings of certain 3-manifolds}, Turkish J. Math. \textbf{26} (2002),
no. 1, 115--130. \MR {1892805 (2003b:57038)}

\bibitem{Thurston_Wilkenkeper_Contact_structures_from_open_books} W. P. Thurston and H. E. Winkelnkemper, \emph{On the existence of contact forms}, Proc. Amer. Math. Soc. \textbf{52}
(1975), 345--347. \MR {0375366 (51 \#11561)}

\bibitem{Thurston_3_Manifolds_Kleinian_groups_and_hyperbolic_geometry} William P. Thurston, \emph{Three-dimensional manifolds, Kleinian groups and hyperbolic geometry}, Bull. Amer.
Math. Soc. (N.S.) \textbf{6} (1982), no. 3, 357--381. \MR {648524 (83h:57019)}

\bibitem{thurston_geometry_and_dynamics_of_diffeomorphism_of_surface}William P. Thurston, \emph{On the geometry and dynamics of diffeomorphisms of surfaces}, Bull. Amer. Math. Soc. (N.S.) \textbf{19}
(1988), no. 2, 417--431. \MR {956596 (89k:57023)}

\bibitem{Wendl_Planar_open_books} Chris Wendl, \emph{Strongly fillable contact manifolds and J-holomorphic foliations}, Duke Math. J. \textbf{151} (2010),
no. 3, 337--384. \MR {2605865 (2011e:53147)}


\end{thebibliography}
\def\cprime{$'$} \def\cprime{$'$}
\providecommand{\bysame}{\leavevmode\hbox to3em{\hrulefill}\thinspace}
\providecommand{\MR}{\relax\ifhmode\unskip\space\fi MR }
\providecommand{\MRhref}[2]{%
  \href{http://www.ams.org/mathscinet-getitem?mr=#1}{#2}
}
\providecommand{\href}[2]{#2}

\end{document}